\renewcommand{\le}{\varleq}
\renewcommand{\ge}{\vargeq}
\renewcommand{\le}{\leq}
\renewcommand{\ge}{\geq}
\newcommand{\alignb}{\[\begin{aligned} }
\newcommand{\alignn}{ \end{aligned}\]}
\newcommand\mL{L\kern-0.08cm\char39}
\newcommand{\myforall}{\text{ for all }}
\newcommand{\myand}{\text{ and }}
\newcommand{\seb}{\{\,}
\newcommand{\sen}{\,\}}
\newcommand{\clos}[1]{\mkern 1.5mu\overline{\mkern-1.5mu#1\mkern-1.5mu}\mkern 1.5mu}
\newcommand{\Ccal}{\mathcal{C}}
\newcommand{\Mcal}{\mathcal{M}}
\newcommand{\Pcal}{\mathcal{P}}
\newcommand{\Scal}{\mathcal{S}}
\newcommand{\kuu}{\emptyset}
\newcommand{\nekuu}{\neq \kuu}
\newcommand{\iskuu}{= \kuu}
\newcommand{\fai}{\varphi}
\newcommand{\barB}{\bar{B}}
\newcommand{\barU}{\bar{U}}
\newcommand{\pdirectional}{\raise0.05em\hbox{$+$}directional}
\newcommand{\pdirectionality}{\raise0.05em\hbox{$+$}directionality}
\newcommand{\pdirectionalitys}{\raise0.05em\hbox{$+$}directionality }
\newcommand{\pdirectionals}{\raise0.05em\hbox{$+$}directional }
\newcommand{\mdirectional}{\raise0.05em\hbox{$-$}directional}
\newcommand{\mdirectionality}{\raise0.05em\hbox{$-$}directionality}
\newcommand{\mdirectionalitys}{\raise0.05em\hbox{$-$}directionality }
\newcommand{\mdirectionals}{\raise0.05em\hbox{$-$}directional }
\newcommand{\Z}{\mathbb{Z}}
\newcommand{\R}{\mathbb{R}}
\newcommand{\bi}{\in \Z}
\newcommand{\bpi}{\ge 1}
\newcommand{\bni}{\ge 0}
\newcommand{\diam}{{\rm diam}}
\newcommand{\ep}{\varepsilon}
\newcommand{\centb}{\begin{center}}
\newcommand{\centn}{\end{center}}
\newcommand{\enumb}{\begin{enumerate}}
\newcommand{\enumn}{\end{enumerate}}
\newcommand{\itemb}{\begin{itemize}}
\newcommand{\itemn}{\end{itemize}}
\numberwithin{equation}{section}
\setlist[enumerate,1]{label=(\alph*),ref=(\alph*)}
\setlist[enumerate,2]{label=(\arabic*),ref=(\alph{enumi}-\arabic{enumii})}
\setlist[enumerate,3]{label=(\Alph*),ref=(\roman{enumi}-\alph{enumii}-\Alph*)}
\setlist[enumerate,4]{label=(\arabic*),ref=(\roman{enumi}-\alph{enumii}-\Alph{enumiii}-\arabic*)}
\newlist{enumrm}{enumerate}{1}
\setlist[enumrm,1]{label={\rm (\roman*)},ref=(\roman*)}
\newcommand{\enumrmb}{\begin{enumrm}}
\newcommand{\enumrmn}{\end{enumrm}}
\newlist{enumsec}{enumerate}{1}
\setlist[enumsec,1]{label={\rm (\thesection.\alph*)},ref=(\thesection.\alph*)}
\newcommand{\enumsecb}{\begin{enumsec}[resume]}
\newcommand{\enumsecn}{\end{enumsec}}
\newlist{deepenum}{enumerate}{1}
\setlist[deepenum,1]{label=($1$:\alph*),ref=(1:\alph*)}
\newlist{Lminusoneenum}{enumerate}{1}
\setlist[Lminusoneenum,1]{label=($-1$:\alph*),ref=($-1$:\alph*)}
\newlist{Ldeepenum}{enumerate}{1}
\setlist[Ldeepenum,1]{label=($2$:\alph*),ref=($2$:\alph*)}
\numberwithin{equation}{section}
\newtheorem{thm}[equation]{Theorem}
\newtheorem{lem}[equation]{Lemma}
\newtheorem{prop}[equation]{Proposition}
\newtheorem{cor}[equation]{Corollary}
\theoremstyle{definition}
\newtheorem{defn}[equation]{Definition}
\newtheorem{example}[equation]{Example}
\theoremstyle{remark}
\newtheorem{nota}[equation]{Notation}
\newtheorem{rem}[equation]{Remark}
\crefname{sec}{\S}{\S\S}
\crefname{mainthm}{Theorem}{Theorems}
\crefname{maincor}{Corollary}{Corollaries}
\crefname{thm}{Theorem}{Theorems}
\crefname{lem}{Lemma}{Lemmas}
\crefname{prop}{Proposition}{Propositions}
\crefname{cor}{Corollary}{Corollaries}
\crefname{defn}{Definition}{Definitions}
\crefname{conj}{Conjecture}{Conjectures}
\crefname{example}{Example}{Examples}
\crefname{nota}{Notation}{Notations}
\crefname{rem}{Remark}{Remarks}
\crefname{note}{Note}{Notes}
\crefname{case}{Case}{Cases}
\crefname{figure}{Figure}{Figures}
\crefname{section}{\S}{\S\S}
\crefname{enumi}{}{}
\crefname{enumii}{}{}
\crefname{equation}{}{}
\newcommand{\abs}[1]{|#1|}
\newcommand{\Vp}{V \setminus V_0}
\newcommand{\imply}{$\Rightarrow$}
\begin{document}

\title[Refinement of Bratteli--Vershik models]{Refinement of Bratteli--Vershik models}

\author{Takashi Shimomura}

\address{Nagoya University of Economics, Uchikubo 61-1, Inuyama 484-8504, Japan}
\curraddr{}
\email{tkshimo@nagoya-ku.ac.jp}
\thanks{}

\subjclass[2020]{Primary 37B05, 37B10.}

\date{\today}



\begin{abstract}
In the zero-dimensional systems, the Bratteli--Vershik models can be built
upon certain closed sets that are called `quasi-sections' in this article.
There exists
 a bijective correspondence between
the topological conjugacy
 classes of triples of zero-dimensional systems and quasi-sections
 and the topological conjugacy classes of Bratteli--Vershik models.
Therefore, we can get refined Bratteli--Vershik models if we get
 certain refined quasi-sections.
The basic sets are such refined quasi-sections that bring `closing property'
 on the corresponding Bratteli--Vershik models.
We show a direct proof on the existence of basic sets.
Thorough investigations on quasi-sections and basic sets
 are done.
Furthermore, it would be convenient
 for the Bratteli--Vershik models to concern minimal sets.
To this point, we show the existence of the Bratteli--Vershik models
 whose minimal sets are properly ordered.
On the other hand, we can get certain refinements with respect to
 the Bratteli--Vershikizability condition
 or the decisiveness.
\end{abstract}
\keywords{basic set, zero-dimensional system, Bratteli diagram}

\maketitle

\section{Introduction}\label{sec:introduction}
In this paper, we represent $(X,f)$ as a \textit{zero-dimensional system}
 if $X$ is a compact metrizable zero-dimensional space and 
 $f : X \to X$ is a homeomorphism
 (i.e., we consider only invertible zero-dimensional systems). 
For each $x \in X$, let $O(x) := \seb f^n(x) \mid n \in \Z \sen$,
 i.e., $O(x)$ is the orbit of $x$.
We say that a closed and open (clopen) set $A$ is a \textit{complete section}
 if $A \cap O(x) \nekuu$ for all $x \in X$.
It is easy to see that a clopen set $A$ is a complete section if and only
 if every positive orbit $x, f(x), f^2(x), \cdots$ enters $A$.
A closed set $B \subseteq X$ is called a \textit{quasi-section} if
 there exists a sequence $A_0 \supseteq A_1 \supseteq A_2 \supseteq \cdots$
 of complete sections such that $B = \bigcap_{n =0}^{\infty}A_n$.
The notion of quasi-section (with a different terminology)
 has been introduced by
 Poon \cite[\S 1]{Poon1990AFSubAlgebrasOfCertainCrossedProducts}
 and has been used to study the $C^*$-crossed product.
What is meaningful to mention is that Poon used quasi-sections to produce
 some AF-algebras.
For every $x \in X$, the $\omega$-limit set (see \cref{defn:omegalimitset})
 of $x$ is denoted by $\omega(x)$.
It is quite easy to see that $B$ is a quasi-section if and only if
 the intersection $B \cap \omega(x)$ is not empty for all $x \in X$,
 (see \cref{lem:qs} for details).
We call
 a quasi-section $B \subseteq X$ is a \textit{basic set}
 if every orbit passes through $B$ at most once, i.e.,
 a quasi-section $B$ is a basic set if and only if
 $\abs{O(x) \cap B} \le 1$ for all $x \in X$.
The existence of basic sets for an arbitrary zero-dimensional systems
 have been given in 
 \cite[Theorem 3.34]{Shimomura_2020AiMBratteliVershikModelsAndGraphCoveringModels}.
For Bratteli--Vershik models on an arbitrary zero-dimensional systems $(X,f)$,
 let us consider the topological conjugacy of a triple $(X,f,B)$
 with a quasi-section $B$ corresponding to the set of minimal paths 
 of the ordered Bratteli diagram.
The Bratteli--Vershik models are well known to be a key 
to understanding the $C^*$-crossed product.
In \cite{HERMAN_1992OrdBratteliDiagDimGroupTopDyn}, Herman, Putnam and Skau,
 in these context,
 showed the bijective correspondence
 between equivalence classes of essentially simple ordered Bratteli diagrams
 and pointed topological conjugacy classes of essentially minimal systems,
 i.e., they considered the class of triples $(X,f,x_0)$ of essentially 
 minimal system $(X,f)$ and a point $x_0$ from the minimal set.
In their work, this bijective correspondence extends to  
 isomorphism classes of certain $C^*$-algebraic objects.
They used the pointed topological conjugacy because the set of minimal
 paths of an essentially simple ordered Bratteli diagram is a single point.
In \cite{Medynets_2006CantorAperSysBratDiag}, for Cantor systems that have no periodic points,
 Medynets defined
 basic sets, proved their existence, and constructed the
 corresponding Bratteli--Vershik models, getting out of pointed topological
 conjugacy.
He also studied the $K$-theory of zero-dimensional systems that have no periodic points.
In this paper, our work does not extend to $C^*$-algebraic objects.
However, the restriction of aperiodicity is now removed, i.e., we treat all zero-dimensional systems.
In the earlier versions of this manuscript,
 I established the bijective correspondence
 in the following \cref{thm:bijectivetriplesBVmodels}.
After these,
 Golestani, Hosseini, and Oghli,
 in \cite[\S 4]{GHO_2023TopologicalFactoringOfZeroDimensionalDynamicalSystems},
 have shown that the basis of \cref{thm:bijectivetriplesBVmodels}
 had already been established by 
 Putnam in the proof of 
 \cite[Lemma 3.1]{Putnam89TheCstarAlgebrasAssociatedWithMinimalHomeomorphismsOfTheCantorSet} and reestablished the proof in
 \cite[Proposition 4.4]{GHO_2023TopologicalFactoringOfZeroDimensionalDynamicalSystems}.
\begin{thm}[Putnam {\cite[Lemma 3.1]{Putnam89TheCstarAlgebrasAssociatedWithMinimalHomeomorphismsOfTheCantorSet}}]\label{thm:bijectivetriplesBVmodels}
There exists a conjugacy-preserving bijective correspondence between topological conjugacy 
 classes of triples of zero-dimensional systems and quasi-sections
 and topological conjugacy classes of Bratteli--Vershik models.
\end{thm}
In the above study, Putnam described the proof
 in the context of
 the Cantor minimal systems.
Therefore, I think it is still worthwhile to describe the proofs
 in the context of general zero-dimensional systems.
I have given a thorough proof in this paper (see the proof of \cref{thm:from-quasi-section-to-KR}).

In the above clarification of the bijective correspondence,
 we can get the possibility of getting some refined Bratteli--Vershik models
 by finding some refined quasi-sections.
In the context of the Bratteli--Vershik models, the notion of basic sets
 corresponds
 to the notion of `closing property' (see \cref{defn:keeping-constant})
 that was introduced
 in \cite{Shimomura_2020AiMBratteliVershikModelsAndGraphCoveringModels}.
Thus, we get a refined bijective correspondence as follows:
\begin{thm}\label{thm:bijectivebasicsetsBVmodelswithclosingproperty}
There exists a conjugacy-preserving bijective correspondence between topological conjugacy 
 classes of triples of zero-dimensional systems and 
 basic sets
 and topological conjugacy classes of 
 Bratteli--Vershik models
 with the closing property.
Furthermore, these classes are not empty
 for an arbitrary zero-dimensional systems.
\end{thm}

Much more refined bijective correspondence is meaningful by 
 finding some refined basic sets for all zero-dimensional systems.
Bezuglyi, Niu and Sun in \cite{BezuglyiNiuSun_2021CstarAlgOfaCantorSysWithFinitelyManyMinStructKTtheAndIndexMap}
 studied $C^*$-algebra of zero-dimensional systems
 with finitely many minimal sets.
If the number of minimal sets is $k$, then they constructed 
Bratteli--Vershik models considering the minimal sets.
They defined $k$-simple Bratteli diagrams for a positive integer $k$
 (the number of minimal sets).
For an arbitrary zero-dimensional system, we say that
 a Bratteli--Vershik model is `quasi-simple' if
 the restriction to each minimal set is properly ordered
 (see \cref{defn:simple,defn:oBd-properly-ordered,defn:BV-quasi-simple}).
To specify, for a triple $(X,f,B)$, we say that
 a quasi-section $B$ is \textit{quasi-simple} if $\abs{B \cap M}=1$
 for each minimal set $M$ of $(X,f)$.
We show the existence of quasi-simple basic sets in \cref{thm:inf_f}.
Thus, we get the following:
\begin{thm}\label{thm:bijectiveqsbasicsetsBVmodelswithssclosingproperty}
There exists a conjugacy-preserving bijective correspondence between topological conjugacy 
 classes of triples of zero-dimensional systems and 
 quasi-simple quasi-sections
 and topological conjugacy classes of 
 quasi-simple Bratteli--Vershik models.
Furthermore, these classes are not empty
 for an arbitrary zero-dimensional systems.
\end{thm}

Another way of refining the Bratteli--Vershik models was
 found by Downarowicz and Karpel in
 \cite{DownarowiczKarpel_2018DynamicsInDimensionZeroASurvey,DownarowiczKarpel_2019DecisiveBratteliVershikmodels}.
This enables us to find some
 bijective correspondences between triples and
 ordered Bratteli diagrams.
The \cref{thm:bijectivetriplesBVmodels} does not imply some bijective
 correspondences between triples and ordered Bratteli diagrams.
In \cite{BezuglyiYassawi2017OrdersThatYieldHomeoOnBratteliDiagrams,BezuglyiKwiatkowskiYassawi_2014PerfectOrderingsOnFiniteRankBratteliDiagrams},
 Bezuglyi, Yassawi, and Kwiatkowski considered the condition on
 Bratteli diagrams of having continuous Vershik maps.
In general, an ordered Bratteli diagram may not have a unique Vershik map
 even if it has a continuous Vershik map.
Therefore, we need the work of Downarowicz and Karpel
 \cite{DownarowiczKarpel_2019DecisiveBratteliVershikmodels}.
They presented the notion of
 decisiveness on ordered Bratteli diagrams,
as a result of which the ordered Bratteli diagrams can have 
 unique Vershik maps (see \cref{defn:decisive}).
They also introduced the notion of the Bratteli--Vershikizability
 condition
 for zero-dimensional systems, i.e.,
 a zero-dimensional system is called Bratteli--Vershikizable
 if it is conjugate to a Bratteli--Vershik model constructed from 
 a decisive ordered Bratteli diagram (cf. \cref{defn:Bratteli--Vershikizable}).
In their main theorem
 \cite[Theorem 3.1]{DownarowiczKarpel_2019DecisiveBratteliVershikmodels},
 they showed that a zero-dimensional system
 is Bratteli--Vershikizable
 if and only if either the set of aperiodic points is
 dense, or its closure misses one periodic orbit.
Here, as an application of
 the bijective correspondence \cref{thm:bijectivetriplesBVmodels},
 we give a new way of proving their main result
 (see the proof of \cref{thm:Bratteli-Vershikizability-new}).
Under the Bratteli--Vershikizability condition,
 we can also obtain a refined Bratteli--Vershik models
 by selecting quasi-simple decisive basic sets
 (cf. \cref{thm:quasi-simple-decisive-and-closing}).

Owing to their work, we can now delve into bijective correspondences
 between triples and ordered Bratteli diagrams.
Their main theorem
 \cite[Theorem 3.1]{DownarowiczKarpel_2019DecisiveBratteliVershikmodels}
 enables us to concentrate on the systems that have dense aperiodic points.
In this paper, a zero-dimensional system is called 
 \textit{densely aperiodic} if it has dense aperiodic points.
In \cite{Poon1990AFSubAlgebrasOfCertainCrossedProducts},
 Poon studied such zero-dimensional systems
 (cf. \cite[Theorem 4.3]{Poon1990AFSubAlgebrasOfCertainCrossedProducts}).
An ordered Bratteli diagram is called \textit{continuously decisive} if
 it is decisive and
 the set of maximal paths contains empty interior.
We also provide the following definition:
 a quasi-section $B$ is \textit{continuously decisive}
 if $\textrm{int}B \iskuu$, and a triple $(X,f,B)$ is
 \textit{continuously decisive} if $B$ is continuously decisive.
Thus, we can obtain a desired bijective correspondence.
\begin{thm}\label{thm:bijective-continuously-decisive}
There exists a bijective correspondence between the equivalence classes of
 continuously decisive ordered Bratteli diagrams
 and the topological conjugacy
 classes of continuously decisive triples of
 zero-dimensional systems with quasi-sections.
\end{thm}
In particular, it is easy to obtain distinctive bijections
 by choosing subclasses of quasi-sections
 (see \cref{cor:bijective-continuously-decisive-closing,cor:bijective-continuously-decisive-quasi-simple}).

In \cref{sec:preliminaries}, we introduce certain essential definitions
 and notations.
Concerning the notions of quasi-section, basic set, and its minimality;
 initially, we analyze certain basics of 
 zero-dimensional analysis to prepare the base for later sections.
In \cref{sec:quasi-section-and-KR-refinement},
 we introduce the notion of Kakutani--Rokhlin refinements
 (abbrev. K--R refinements) and show
 \cref{thm:from-quasi-section-to-KR}.
In \cref{sec:main-basic-results}, we introduce the Bratteli--Vershik models.
Through \cref{sec:quasi-section-and-KR-refinement,sec:main-basic-results},
 we link previously defined notions and \cref{thm:from-quasi-section-to-KR}.
In \cref{subsec:proofofbasicresults}, we show a proof of
  \cref{thm:bijectivetriplesBVmodels,thm:bijectivebasicsetsBVmodelswithclosingproperty,thm:bijectiveqsbasicsetsBVmodelswithssclosingproperty}.
In \cref{sec:decisiveness}, relations between quasi-sections and 
 decisiveness are considered and the bijective correspondences
 in the level of ordered Bratteli diagrams are proved.
Finally, in \cref{sec:applic}, we present some applications.
In particular, in \cref{cor:tt-closing-decisive},
 we prove that if a zero-dimensional system is topologically transitive, then closing property
on a Bratteli--Vershik model  implies its decisiveness.

\section{Preliminaries and Notation}\label{sec:preliminaries}
Let $\Z$ be the set of all integers.
For integers $a < b$, the intervals are denoted by
 $[a,b] := \seb a, a+1, \dotsc,b \sen$, and so on.
Let $(X,f)$ be a zero-dimensional system and $d$ be a metric on $X$.
For a subset $A \subseteq X$,
 $\diam(A) := \sup \seb d(x,y) \mid x,y \in A \sen$.
We denote
 the interior of $A$ in $X$ by $\textrm{int}A$.
The set of aperiodic points is denoted as $A_f$, i.e.,
 $A_f = \seb x \mid f^n(x) \ne x \textrm{ for all } n \ne 0 \sen$.
In addition, for a subset $A \subseteq X$, we denote the orbit 
 as $O(A) := \bigcup_{n \bi}f^n(A)$.
If $f(Y) = Y$ for a non-empty closed set $Y \subseteq X$, 
then $(Y,f|_Y)$ is called a \textit{subsystem} of $(X,f)$.
A non-empty closed set $M \subseteq X$ is called a \textit{minimal set} 
 if $f(M) = M$ and every orbit in $M$ is dense in $M$.
We denote $\Mcal_f := \seb M \mid M \text{ is a minimal set.} \sen$.

\begin{defn}\label{defn:omegalimitset}
Let $(X,f)$ be a zero-dimensional system.

The positive orbit of $x$ is denoted as $O_+(x)$, i.e.,
 $O_+(x) = \seb f^n(x) \mid n \ge 0 \sen$.

The negative orbit of $x$ is denoted as $O_-(x)$, i.e.,
 $O_-(x) = \seb f^n(x) \mid n \le 0 \sen$.

For each point $x \in X$, the \textit{$\omega$-limit set} $\omega(x)$
 is defined
 as $\omega(x) := \bigcap_{n > 0}\clos{O_+(f^n(x))}$.

For each point $x \in X$, the \textit{$\alpha$-limit set} $\alpha(x)$
 is defined
 as $\alpha(x) := \bigcap_{n < 0}\clos{O_-(f^n(x))}$.
\end{defn}

A point $x \in X$ is said to be \textit{wandering} if there exists
 an open set $U \ni x$ such that $f^n(U) \cap U \iskuu$ for all $n \ne 0$.
Thus, we define  $\Omega_f := \seb x \mid x \text{ is not wandering. } \sen$.
The set $\Omega_f$ is said to be the \textit{non-wandering set} of $(X,f)$.
\begin{nota}\label{nota:equivalence-for-triple}
Let $(X,f,B)$ and $(Y,g,B')$ be two triples such that
 both $(X,f)$ and $(Y,g)$ are zero-dimensional systems, and
 both $B \subseteq X$ and $B' \subseteq Y$ are quasi-sections.
We say that $(X,f,B)$ and $(Y,g,B')$ are \textit{topologically conjugate}
 if there exists a homeomorphism $\fai :X \to Y$ such that
 $\fai \circ f = g \circ \fai$ and $B' = \fai(B)$.
\end{nota}

\begin{defn}
A zero-dimensional system $(X,f)$ is \textit{topologically transitive} if,
 for every pair of nonempty open sets $U,V \subseteq X$,
 there exists a positive integer $n$ such that
 $f^n(U) \cap V \nekuu$.
\end{defn}
It is self-evident that if $(X,f)$ is topologically transitive,
 then $\Omega(f) = X$.
Following Medynets \cite{Medynets_2006CantorAperSysBratDiag},
 we apply the following definition. 
For a zero-dimensional system $(X,f)$,
 let $h$ be a positive integer and $U \subseteq X$ be
 a non-empty closed and open set.
If all $f^i(U)$ $(0 \le i < h)$ are mutually disjoint,
 then $\barU := \seb f^i(U) \mid 0 \le i < h \sen$ is called a \textit{tower}
 with \textit{base} $U$ and \textit{height} $h$.
In this context, we say that each $f^i(U)$ ($0 \le i < h$)
 is a \textit{floor}.
We say that \textit{the diameter of the tower is $\epsilon$} if
 $\epsilon = \max_{i \in [0,h]} \diam(f^i(U))$.
We include $i = h$ when we calculate the diameters of towers.
We denote that $\bigcup \barU := \bigcup_{0 \le i < h}f^i(U)$.
The floor $f^{h-1}(U)$ is called the \textit{top floor}.
The notion of the tower has played a central role in
 the study of the Bratteli--Vershik models,
 a tower corresponds to a vertex of the related Bratteli diagram
 (see \cref{prop:BV-to-KR,prop:KR-to-ordered-Bratteli-diagram}).

\subsection{Basics of quasi-sections and basic sets}\label{subsec:basicfactquasisec_basicsets}

If we shall have accomplished the proof of \cref{thm:bijectivetriplesBVmodels},
 then many properties of Bratteli--Vershik models can be transferred
 to the quality of quasi-sections.
Therefore, it is worthwhile to conduct 
 a basic topological study on quasi-sections as well as basic sets.

Firstly, we note that a clopen set $U$ is a complete section
 if and only if $O(U) =X$.
In the study of
 the $C^*$-algebras of zero-dimensional systems, Poon \cite{Poon1990AFSubAlgebrasOfCertainCrossedProducts} considered
 closed sets such that every clopen neighborhood $U$ satisfies $O(U) = X$.
We shall check it out in the next lemma that
 such closed sets are exactly quasi-sections.

\begin{lem}\label{lem:qs}
Let $(X,f)$ be a zero-dimensional system and 
 $A \subseteq X$ be a closed set.
Then, the following statements are equivalent:
\enumb
\item\label{qs:qs} $A$ is a quasi-section,
\item\label{qs:compsec} every clopen set $U \supseteq A$ is a complete section,
\item\label{qs:orbit} $\clos{O(x)} \cap A \nekuu$ for every $x \in X$,
\item\label{qs:alpha} $\alpha(x) \cap A \nekuu$ for every $x \in X$, and
\item\label{qs:omega} $\omega(x) \cap A \nekuu$ for every $x \in X$, and
\item\label{qs:minimal} $A \cap M \nekuu$ for every $M \in \Mcal_f$.
\enumn
\end{lem}
\begin{proof}
To show \cref{qs:qs} \imply \cref{qs:compsec},
 let $A$ be a quasi-section; $U \supseteq A$, a clopen set;
 and $A_0 \supseteq A_1 \supseteq A_2 \supseteq \cdots$,
 a sequence of complete sections such that $\bigcap_{n=0}^{\infty}A_n = A$.
By the compactness of $X$, there exists an $n$ such that
 $U \supseteq A_n$.
Now, it is evident that $U$ is a complete section.
We show that \cref{qs:compsec} \imply \cref{qs:orbit}.
Let $A$ be a closed set.
Suppose that $O(U) = X$ for every clopen set $U \supseteq A$.
Let $x \in X$.
Then, it follows that $O(x)$ enters every open neighborhood of $A$.
Thus, it is clear that $\clos{O(x)} \cap A \nekuu$, as desired.
To show \cref{qs:orbit} \imply \cref{qs:alpha}, let
 $x \in X$.
Let $y \in \alpha(x)$.
Then, by \cref{qs:orbit}, it follows that $\clos{O(y)} \cap A \nekuu$.
Thus, $\alpha(x) \cap A \supseteq \clos{O(y)} \cap A \nekuu$, as desired.
In the same way,
 it is now evident that \cref{qs:orbit} \imply \cref{qs:omega}.
To show \cref{qs:alpha} \imply \cref{qs:minimal},
 let $M \in \Mcal_f$.
We have to show that $M \cap A \nekuu$.
However, this is evident because every $\alpha$-limit set of the point
 in $M$ has a non-empty intersection with $A$, by \cref{qs:alpha}.
In the same way, it is now evident
 that \cref{qs:omega} \imply \cref{qs:minimal}.
Finally, we show that \cref{qs:minimal} \imply \cref{qs:qs}.
It is easy to construct a sequence of clopen sets
 $A_0 \supseteq A_1 \supseteq A_2 \supseteq \cdots$ such that 
 $\bigcap_{n=0}^{\infty}A_n = A$.
For each $n \ge 0$ and each $x \in X$,
 it is evident that $A_n \cap O(x) \nekuu$.
This implies that each $A_n$ is a complete section.
\end{proof}

The minimal (with respect to inclusions) quasi-sections were 
 used in
 \cite[\S 4]{Poon1990AFSubAlgebrasOfCertainCrossedProducts}.

\begin{lem}\label{lem:minimalqs}
Every quasi-section contains a minimal quasi-section.
\end{lem}
\begin{proof}
The proof follows directly from \cref{qs:minimal} of \cref{lem:qs}
 in combination with
 the assumption that $X$ is a compact metrizable space.
\end{proof}

\begin{prop}\label{prop:minimalqscontdic}
Suppose that $(X,f)$ is densely aperiodic.
Then, every minimal quasi-section is continuously decisive.
In particular, every densely aperiodic system has a continuously decisive
 quasi-section.
\end{prop}
\begin{proof}
Let $B$ be a minimal quasi-section.
Suppose that $U := \textup{int}B \nekuu$.
First, suppose that there exists an $x \in U$ and an $n \ne 0$ such that
 $f^n(x) = x$.
Then, there exists an aperiodic point $y \in U$ that is close to $x$ such that
 $f^n(y) \in U$ because $(X,f)$ is densely aperiodic by the assumption.
Henceforth, there exists an open set $V$ $(y \in V \subseteq U)$ such that
  $V \cap f^n(V) \iskuu$ and $f^n(V) \subseteq U$.
In this case, if we define $B' := B \setminus V$, then 
 $B'$ is a quasi-section because every orbit that passes through $V$ also
 passes through $f^n(V) \subseteq B'$.
This contradicts the minimality of $B$.
Next, suppose that there does not exist any periodic point in $U$.
We shall show that $f^n(U) \cap U \iskuu$ for all $n \ne 0$.
Suppose that there exists an $n \ne 0$ with $f^n(U) \cap U \nekuu$.
Then, there exists an $x \in U$ with $f^n(x) \in U$.
It follows that $x \ne f^n(x)$ because no periodic point exists in $U$.
Therefore, there exists an open set $V$ ($x \in V \subset U$) with
 $V \cap f^n(V) \iskuu$ and $f^n(V) \subseteq U$.
Again, we have a contradiction as in the first case.
Therefore, we obtain $f^n(U) \cap U \iskuu$ for all $n \ne 0$.
This shows that $U \cap M \iskuu$ for all $M \in \Mcal_f$.
Thus, $B' := B \setminus U$ is a quasi-section.
This contradicts the minimality of $B$.
This completes the proof of the first half of the statement.
The second half of the statement is shown from the first half and \cref{lem:minimalqs}.
\end{proof}

We shall show the examples of some primitive quasi-sections.
\begin{example}\label{example:quasi-section-continuous-map}
Let $(X,f)$ be a Cantor minimal system, $C$ be the Cantor set,
 and $\fai : C \to X$ be a continuous map.
We define a homeomorphism $f \times id : X\times C \to X\times C$
 as $(f \times id)(x,y) = (f(x),y)$ for all $(x,y) \in X \times C$.
Then, it follows that
 $\Mcal_{f \times id} = \seb X \times \seb c \sen \mid c \in C \sen$ and that
 the graph $A := \seb (\fai(y),y) \mid y \in C\sen$ is closed.
Because $A \cap M \nekuu$ for all $M \in \Mcal_{f \times id}$,
 by \cref{lem:qs},
 $A$ is a quasi-section.
We also note that these are minimal quasi-sections.
\end{example}
\begin{example}\label{example:quasi-section-not-basic-set}
Let $(X,f)$ be a minimal set and $C$ be the Cantor set in the interval
 $[0,1] \subset \R$.
We consider a zero-dimensional system $f \times id$ as above.
Then, it follows that
 $\Mcal = \seb X \times \seb c \sen \mid c \in C \sen$.
Take an $a \in C$ such that neither $C \cap [0,a)$ nor $C \cap (a,1]$
 is closed in $C$.
Take $x,y \in X$ such that $x \ne y$.
We define that
 $A := (\seb x \sen \times (C\cap [0,a]))
 \cup (\seb y \sen \times (C\cap [a,1]))$.
It then follows that $A$ is closed.
Because $A \cap M \nekuu$ for all $M \in \Mcal$,
$A$ is a quasi-section.
We note that $A$ is also minimal.
If $y \in O(x)$, then $A$ is not a basic set.
Therefore, a minimal quasi-section may not be a basic set.
On the other hand, if $y \notin O(x)$, then $A$ is a minimal basic set.
In particular, there exists a minimal basic set $B$ that
 satisfies $\abs{B \cap M} \ge 2$ for some $M \in \Mcal$.
\end{example}

\subsubsection{Basic sets}
Medynets \cite{Medynets_2006CantorAperSysBratDiag} defined basic sets 
 for aperiodic zero-dimensional systems.
We have shown that there exists a basic set in every zero-dimensional system
(cf. \cite[Theorem 1.1 and Theorem 4.19]{Shimomura_2020AiMBratteliVershikModelsAndGraphCoveringModels}).
We shall show a direct proof on the existence of certainly refined basic sets
 later in this section
 (cf. \cref{thm:inf_f}).

\begin{lem}\label{lem:intBwander}
Let $(X,f)$ be a densely aperiodic zero-dimensional system.
Suppose that $B$ is a basic set.
Then, for every $n \ne 0$, $f^n(\textup{int}B) \cap \textup{int}B \iskuu$.
In particular, every point $x \in \textup{int}B$ is wandering.
\end{lem}
\begin{proof}
If $\textup{int}B \iskuu$, then the statement is self-evident.
Suppose that $U := \textrm{int} B \nekuu$.
Suppose, on the contrary, that there exists an $n \ne 0$ such that
 $f^n(U) \cap U \nekuu$.
By the assumption that $\clos{A_f} = X$, it follows that
 there exists an $x \in f^n(U) \cap U \cap A_f$.
Then, it follows that $\abs{O(x) \cap B} \ge 2$, which is a contradiction.
\end{proof}

In general, for a quasi-section $B$ in a densely aperiodic zero-dimensional
 system, the set $B\setminus \textup{int}B$
 may not be a quasi-section.
However, this is valid for any basic set.
\begin{prop}
Let $(X,f)$ be a zero-dimensional system.
Suppose that $(X,f)$ is densely aperiodic and $B$ is a basic set.
Then, $B' := B \setminus \textup{int}B$ is a continuously decisive basic set.
\end{prop}
\begin{proof}
It is clear that $B'$ is a closed set
 and $\abs{B' \cap O(x)} \le 1$ for all $x \in X$.
It is also clear that $\textup{int}B' \iskuu$.
We need to show that $B' \cap M \nekuu$ for every $M \in \Mcal_f$.
Fix an $M \in \Mcal_f$.
It then follows that $B \cap M \nekuu$; however, it also follows that
 $M \cap \textup{int}B \iskuu$ owing to \cref{lem:intBwander}.
Thus, we obtain $B' \cap M \nekuu$, as desired.
\end{proof}

\begin{thm}\label{thm:ap-non-wandering}
Let $(X,f)$ be a zero-dimensional system.
Suppose that $(X,f)$ is densely aperiodic
 and $\Omega_f = X$.
Then, every basic set is continuously decisive.
\end{thm}
\begin{proof}
Let $B$ be a basic set of $(X,f)$.
By \cref{lem:intBwander}, it follows that every point in $\textrm{int}B$ is 
 wandering.
Thus, by the assumption that $\Omega_f = X$, it follows that 
 $\textrm{int} B \iskuu$.
\end{proof}

\subsubsection{Quasi-simple basic set}
In this section, we present a direct proof of the existence of
 quasi-simple basic sets in every zero-dimensional system.
To do this, we embed $X$ into the real line $\R$, i.e.,
 $X \subset \R$. In particular,
 $X$ is linearly ordered, and the order topology coincides 
 with the original topology in $X$.

\begin{nota}
We use the notations 
$\inf_f(x) := \inf \seb y \mid y \in O(x) \sen$ and
 $\inf_f := \seb \inf_f(x) \mid x \in X \sen$.
\end{nota}

By the notation, for every $x \in X$, it follows that
 $\inf_f(x) \le x$ and $\inf_f(x) \in \overline{O(x)}$.
We obtain the following lemmas.
\begin{lem}\label{lem:basicinf}
Let $x \in \inf_f$.
 Then, it follows that $\inf_f(x) = x$.
\end{lem}
\begin{proof}
Let $x \in \inf_f$.
Then, there exists a $y \in X$ with $x = \inf_f(y)$.
Evidently, it follows that $\inf_f(\inf_f(y)) \le \inf_f(y)$.
We need to show $\inf_f(\inf_f(y)) = \inf_f(y)$.
Suppose, on the contrary, that $\inf_f(\inf_f(y)) < \inf_f(y)$.
Then, there exists an $n \bi$ such that
 $f^n(\inf_f(y)) < \inf_f(y)$.
If one chooses an $m \bi$ such that
 $f^m(y)$ is sufficiently close to $\inf_f(y)$,
 then one obtains $f^{n+m}(y) = f^n(f^m(y)) < \inf_f(y)$.
This contradicts the definition of $\inf_f(y)$.
\end{proof}

\begin{lem}\label{lem:inforbitinf}
It follows that $\inf_f$ is a closed set.
\end{lem}
\begin{proof}
Let $x_n$ ($n = 1,2,\dots$) be a sequence of points of $\inf_f$.
Suppose that $x_n \to x$ for some $x \in X$.
Suppose that $\inf_f(x) < x$.
Then, there exists an $i \bi$ such that $f^i(x) < x$.
Take an $\ep > 0$ such that $f^i(x) + \ep < x - \ep$.
It follows that $f^i(x_n) < f^i(x) + \ep$ for every sufficiently large $n$.
On the other hand, $x - \ep < x_n$ for every sufficiently large $n$.
Therefore, we obtain $f^i(x_n) < x_n$ for a sufficiently large $n$.
This contradicts \cref{lem:basicinf}.
\end{proof}

\begin{thm}\label{thm:inf_f}
Suppose that $(X,f)$ is a zero-dimensional system.
Then, the set
 $\inf_f$
 is a quasi-simple basic set.
In particular, every zero-dimensional system has a quasi-simple basic set.
\end{thm}

\begin{proof}
By \cref{lem:inforbitinf}, it follows that $\inf_f$ is closed.
Next, we show that $\inf_f$ is a quasi-section.
Let $M \in \Mcal_f$.
We need to show that $\inf_f \cap M \nekuu$.
Let $x \in M$.
Then, it follows that $\inf_f(x) \in \inf_f \cap M$, as desired.
To show that $\inf_f$ is a basic set,
 we need to show that $\abs{O(x) \cap \inf_f} \le 1$ for every
 $x \in X$.
Take $x,y \in \inf_f$ such that $O(x) \ni y$.
Then, by \cref{lem:basicinf}, 
 we obtain $\inf_f(x) = x$ and $\inf_f(y) = y$.
Therefore, we obtain
 $x = \inf_f(x) = \inf(O(x)) = \inf(O(y)) = \inf_f(y) = y$,
 as desired.
Finally, we have to show that $\abs{\inf_f \cap M} = 1$.
Let $M \in \Mcal_f$ and $x,y \in \inf_f \cap M$.
Then, it follows that
 $x = \inf_f(x) = \inf(\clos{O(x)})
 = \inf(M) = \inf(\clos{O(y)}) = \inf_f(y) = y$, as desired.
\end{proof}

\begin{rem}\label{rem:inf_funiqueminimal}
For each $M \in \Mcal$, let $x_M$ be a unique point in $\inf_f \cap M$.
Let $B' := \overline{\seb x_M : M \in \Mcal \sen}$.
Then, $B'$ is the unique minimal basic set in $\inf_f$.
In particular, $\inf_f$ has the unique minimal basic set.
\end{rem}

An $\inf_f$ basic set need not be minimal, as the following example shows.

\begin{example}\label{example:non-minimal-extream}
We show that there exists a zero-dimensional system $(X,f)$
 such that $\inf_f$ is not minimal.
Let us arbitrarily fix an embedding of $X$ into $\R$.
Then, $\inf_f$ is determined.
Suppose that $(X,f)$ contains two fixed points $p_1,p_2$
 and $\seb \seb p_1 \sen, \seb p_2 \sen \sen = \Mcal_f$.
In particular, it follows that $B_0 := \seb p_1, p_2\sen$ is
 the only minimal basic set.
Let us assume that there exist sequences
 $x_{i,n} \to p_i$ ($i = 1,2$ $n = 1,2,\dotsc$)
 of points of $X$
 such that, for all $n = 1,2, \dotsc$, it follows that
 $\alpha(x_{1,n}) = \omega(x_{1,n}) = p_2$ and
 $\alpha(x_{2,n}) = \omega(x_{2,n}) = p_1$.
It is self-evident that such a system exists.
In this system, one of the $p_i$s is less than the other.
Without loss of generality, let us assume that $p_2 < p_1$.
For all sufficiently large $n$, we obtain that $x_{2,n} < p_1$.
Thus, $\inf_f(x_{2,n}) < p_1$ for such $n$s.
For the orbit $O(x_{2,n})$,
 there exists a sole accumulation point $p_1$.
It follows that $\inf_f(x_{2,n}) \in O(x_{2,n})$ for such $n$s.
Thus, we obtain $\inf_f(x_{2,n}) \in \inf_f \cap O(x_{2,n})$ and
 $\inf_f(x_{2,n}) \notin B_0$.
This shows that $\inf_f$ is not minimal, as desired.
\end{example}

Summarizing the argument above, we obtain the following. 

\begin{thm}\label{thm:qs-main}
Let $(X,f)$ be a zero-dimensional system.
Suppose that $(X,f)$ is densely aperiodic.
Then, there exists a quasi-simple continuously decisive basic set $B$.
\end{thm}
\begin{proof}
By \cref{thm:inf_f}, there exists a quasi-simple basic set.
By \cref{lem:minimalqs}, there exists a minimal quasi-section
 $B' \subseteq B$
 that is also a basic set.
By \cref{prop:minimalqscontdic},
 such a $B'$ is continuously decisive.
\end{proof}

\section{Quasi-section and K--R refinement}\label{sec:quasi-section-and-KR-refinement}
To describe the proof of \cref{thm:bijectivetriplesBVmodels} concretely,
 we have to make a link between triples and Bratteli--Vershik models.
In the next section, we use the well-known fact that
 a Bratteli--Vershik model can be considered to be
 a refining sequence of tower decompositions 
 (see  \cref{prop:BV-to-KR,prop:KR-to-ordered-Bratteli-diagram}).
In this section, for concrete descriptions,
 we introduce the following notation.
Let $X = \bigcup_{1 \le i \le k(n)}\bigcup \barB(n,i)$ ($n \ge 0$)
 be a sequence of decompositions by towers $\barB(n,i)$ ($1 \le i \le k(n)$)
 with bases
 $B(n,i)$ ($1 \le i \le k(n)$) and heights $h(n,i) \ge 1$ ($1 \le i \le k(n)$)
, i.e., for every $n \ge 0$ and every $x \in X$, there exists a unique
 pair $i,j$ ($1 \le i \le k(n)$, $0 \le j < h(n,i)$)
 such that $x \in f^j(B(n,i))$.
This sequence is called a \textit{Kakutani--Rokhlin} (K--R) refinement
if the following conditions are satisfied:
\itemb
\item for every pair $m > n \ge 0$,
 each floor $f^j(B(m,i))$ ($1 \le i \le k(m), 0 \le j < h(m,i)$),
 is contained in a floor of level $n$;
\item for every pair $m > n \ge 0$, each base
 $B(m,i)$ ($1 \le i \le k(m)$) is contained in a base of level $n$; and 
\item if $\epsilon_n$ is the maximum of the diameters of the towers
 $\barB(n,i)$ ($1 \le i \le k(n)$), then $\epsilon_n \to 0$ as $n \to \infty$.
\itemn
We also assume that $k(0) = 1$ and the only tower
 $\barB(0,1)$ has base $X$ and height $h(0,1) = 1$.

\begin{nota}\label{nota:Xi-is-a-KR}
When we want to represent a K--R refinement
 $X = \bigcup_{1 \le i \le k(n)}\bigcup \barB(n,i)$ ($n \ge 0$) by $\Xi$,
 we say that 
 $\Xi : X = \bigcup_{1 \le i \le k(n)}\bigcup \barB(n,i)$ ($n \ge 0$)
 is a K--R refinement.
In this case, we denote as
\itemb
\item $B_{\Xi}(n) : = \bigcup_{1 \le i \le k(n)}B(n,i)$ for each $n \ge 0$ and
\item $B_{\Xi} := \bigcap_{n \ge 0}B_{\Xi}(n)$.
\itemn
It follows that $B_{\Xi}(n) = \bigcup_{1 \le i \le k(n)}f^{h(n,i)}(B(n,i))$
 for all $n \ge 0$.
\end{nota}
For a K--R refinement
 $\Xi : X = \bigcup_{1 \le i \le k(n)}\bigcup \barB(n,i)$ ($n \ge 0$),
 one can \textit{telescope} $\Xi$ to get only an arbitrary subsequence
 $\Xi' : X = \bigcup_{1 \le i \le k(n_l)}\bigcup \barB(n_l,i)$ ($l \ge 0$)
 with $n_0 < n_1 < n_2 < \cdots$.
Evidently, it follows that $B_{\Xi'} = B_{\Xi}$.
\begin{prop}\label{prop:K--R refinement-to-quasi-section}
Let $\Xi : X = \bigcup_{1 \le i \le k(n)}\bigcup \barB(n,i)$ ($n \ge 0$)
 be a K--R refinement of a zero-dimensional system $(X,f)$.
Then, it follows that $B_{\Xi}$ is a quasi-section.
\end{prop}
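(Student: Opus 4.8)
The plan is to prove that $B_{\Xi}$ is a quasi-section by appealing to the minimal-set characterization in \cref{prop:iff-condition-for-quasi-section}: once $B_{\Xi}$ is known to be closed, it suffices to verify that $B_{\Xi} \cap M \nekuu$ for every $M \in \Mcal$. First I would record the two structural facts I need. Each base $B(n,i)$ is closed and open, so each finite union $B_{\Xi}(n) = \bigcup_{1 \le i \le k(n)} B(n,i)$ is clopen, and hence $B_{\Xi} = \bigcap_{n \ge 0} B_{\Xi}(n)$ is closed. Moreover, the condition in the definition of a K--R refinement that, for $m > n$, each base $B(m,i)$ is contained in a base of level $n$ forces $B_{\Xi}(m) \subseteq B_{\Xi}(n)$; thus the sequence $B_{\Xi}(n)$ is decreasing.

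Next comes the orbit argument, which is the core of the proof. Fix $M \in \Mcal$ and $n \ge 0$, and pick any $x \in M$. Since $X = \bigcup_{1 \le i \le k(n)} \barB(n,i)$ is a partition into towers, $x$ lies in a unique floor $f^j(B(n,i))$ with $0 \le j < l(n,i)$. Applying $f^{l(n,i)-j}$ moves $x$ into $f^{l(n,i)}(B(n,i)) \subseteq B_{\Xi}(n)$, where the inclusion uses the identity $B_{\Xi}(n) = \bigcup_{1 \le i \le k(n)} f^{l(n,i)}(B(n,i))$ recorded in \cref{nota:Xi-is-a-KR}. Because $M$ is $f$-invariant, the point $f^{l(n,i)-j}(x)$ lies in $M \cap B_{\Xi}(n)$, so $M \cap B_{\Xi}(n) \nekuu$.

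Finally I would pass to the intersection over $n$. The sets $M \cap B_{\Xi}(n)$ are closed subsets of the compact space $X$, they are nonempty by the previous step, and they decrease with $n$ since the $B_{\Xi}(n)$ do. By the finite intersection property it follows that $M \cap B_{\Xi} = \bigcap_{n \ge 0}\bigl(M \cap B_{\Xi}(n)\bigr) \nekuu$. As $M \in \Mcal$ was arbitrary, \cref{prop:iff-condition-for-quasi-section} then gives that $B_{\Xi}$ is a quasi-section.

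I expect the only genuinely delicate point to be this last passage from ``$M$ meets every $B_{\Xi}(n)$'' to ``$M$ meets $B_{\Xi}$''; it is precisely here that the nestedness supplied by the base-containment condition and the compactness of $X$ are both indispensable, since without either one a decreasing family of nonempty finite-intersection sets could still have empty total intersection.
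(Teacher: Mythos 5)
Your proof is correct, but it takes a different route from the paper's. The paper argues directly from the definition of quasi-section: it observes that each $B_{\Xi}(n)$ is a complete section (leaving implicit the orbit-pushing argument that any point of $\omega(x)$ lies in some floor $f^j(B(n,i))$ and can be moved forward into $f^{l(n,i)}(B(n,i)) \subseteq B_{\Xi}(n)$), and then concludes from $B_{\Xi} = \bigcap_{n} B_{\Xi}(n)$ — implicitly invoking the compactness argument of \cref{prop:quasi-section-is-closed-in-inclusions} to show that any clopen $U \supseteq B_{\Xi}$ already contains some $B_{\Xi}(n)$ and is therefore a complete section. You instead route everything through the minimal-set characterization of \cref{prop:iff-condition-for-quasi-section}, applying the same orbit-pushing step to a point of a minimal set $M$ and then using the finite intersection property on the nested nonempty compact sets $M \cap B_{\Xi}(n)$. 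The two arguments use the same two ingredients (the tower structure to hit the base level, and compactness of a decreasing nested family), just packaged differently: the paper's version verifies the definition of quasi-section directly and is shorter once one accepts its two implicit steps, while yours is more explicit and self-contained, at the cost of depending on \cref{prop:iff-condition-for-quasi-section}. Both are valid; your closing remark correctly identifies the nestedness of the bases and compactness as the load-bearing points.
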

\begin{proof}
From the definition of K--R refinement, for each $n \ge 0$,
 $B_{\Xi}(n)$ is a complete section.
The conclusion follows from the definition,
 because $B_{\Xi} = \bigcap_{n \ge 0}B_{\Xi}(n)$.
\end{proof}

The following fact had been seen by Putnam in the proof of
 \cite[Lemma 3.1]{Putnam89TheCstarAlgebrasAssociatedWithMinimalHomeomorphismsOfTheCantorSet}.
\begin{thm}\label{thm:from-quasi-section-to-KR}
Let $(X,f)$ be a zero-dimensional system and $B$ be a quasi-section.
Then, there exists a K--R refinement
 $\Xi : X = \bigcup_{1 \le i \le k(n)}\bigcup \barB(n,i)$ such that
 $B = B_{\Xi}$.
\end{thm}
\begin{proof}
We take and fix a refining sequence
 $\Pcal_n$ ($n \ge 0$)
 of finite partitions of $X$ using non-empty closed and open sets;
 that is,
\itemb
\item for each $n \ge 0$, $\Pcal_n$
 is a finite set of non-empty closed and open subsets of $X$;
\item $X = \bigcup_{U \in \Pcal_n}U$ for all $n \ge 0$;
\item for all $n \ge 0$, $U \cap U' \iskuu$ for distinct $U, U' \in \Pcal_n$;
\item for each $m > n \ge 1$ and $U \in \Pcal_m$,
 there exists a $U' \in \Pcal_n$ such that
 $U \subseteq U'$; and
\item $\max \seb \diam(U) \mid U \in \Pcal_n \sen \to 0$
 as $n \to \infty$.
\itemn
We use the convention that
 $\Pcal_0 = \seb X \sen$.

We define that $k(0) = 1$ and $\barB(0,1)$ is a trivial tower
 with base floor $B(0,1) = X$ and height = 1.
Suppose that $(n-1)$th tower is built for certain $n \ge 1$ as
 $X = \bigcup_{1 \le i \le k(n-1)}\bigcup \barB(n-1,i)$.
Then, we define that
\[\Pcal'_n := \seb U \cap f^j(B(n-1,i)) \mid
 U \in \Pcal_n, 1 \le i \le k(n-1), 1 \le j < h(n-1,i) \sen.\]
We exclude the empty set from $\Pcal'$ if necessary.
We define that
 $\Ccal_n := \seb U \in \Pcal'_n \mid U \cap B \nekuu \sen$ and
 $C_n := \bigcup_{U \in \Ccal_n}U$.
Then, $C_n$ is a closed and open neighborhood of $B$.
In particular, we find that $C_n$ is a complete section.
Take an arbitrary $x \in C_n$.
Since $x \in C_n$ is recurrent with respect to $C_n$, there exists a least positive integer $h(n,x)$ such that $f^{h(n,x)}(x) \in C_n$.
We construct a finite sequence
 $s(n,x) := (U_0, U_1, U_2, \dotsc, U_{h(n,x)-1}) \in {\Pcal'_n}^{h(n,x)}$
 such that $f^j(x) \in U_j$ for all $0 \le j < h(n,x)$.
Evidently, we obtain $\sup_{x \in C_n}h(n,x) < \infty$.
Therefore, if we define that $\Scal_n := \seb s(n,x) \mid x \in C_n \sen$,
 then $\Scal_n$ is a finite set.
For each $s = (U_0,U_1,\dotsc,U_{h-1}) \in \Scal_n$,
 we define $h(s) := h$ and $B(s) := \seb x \in C_n \mid s(n,x) = s \sen$.
Evidently, for each $s \in \Scal_n$, $B(s)$ is a non-empty open set.
It is also evident that $\seb B(s) \mid s \in \Scal_n \sen$ is
 a finite partition of $C_n$.
It follows that each $B(s)$ ($s \in \Scal_n$) is also closed.
First, we show that
 the sets $f^i(B(s))$ ($s \in \Scal_n, 0 \le i < h(s))$ are mutually disjoint.
Suppose that $f^i(B(s)) \cap f^{i'}(B(s')) \nekuu$ for $(s,i) \ne (s',i')$ 
 with $s, s' \in \Scal_n$, with $1 \le i < h(s)$, and with $1 \le i' < h(s')$.
Then, take and fix an $x \in f^i(B(s)) \cap f^{i'}(B(s'))$.
Without loss of generality, we can assume that $i < i'$.
Let $y = f^{-i}(x) \in C_n$ and $z = f^{-i'}(x) \in C_n$.
Then, we get $f^{i'-i}(z) = y$.
Thus, we obtain $f^{i'-i}(B(s')) \cap C_n \nekuu$.
Because $i'-i < h(s')$, we obtain a contradiction.
Therefore, we get mutually disjoint towers $\barB(s)$ ($s \in \Scal_n$).
We have to show that $X = \bigcup_{s \in \Scal_n}\bigcup \barB(s)$.
Take and fix an $x \in X$ arbitrarily.
Then, it follows that there exists the least
 integer $i \ge 0$
 such that $f^{-i}(x) \in C_n$.
Suppose that $i = 0$.
Then, we get $x \in C_n$.
Because $\seb B(s) \mid s \in \Scal_n \sen$ covers
 $C_n$, we get $x \in \bigcup_{s \in \Scal_n}\bigcup \barB(s)$, as desired.
Thus, we assume that $i > 0$.
Let $y = f^{-i}(x)$.
Then, because $h(n,y)$ is the least positive integer
 such that $f^{h(n,y)}(y) \in C_n$,
 we get $i < h(n,y)$.
Thus, we obtain $x \in \bigcup \barB(s(n,y))$, as desired.
From the construction, every floor of $\bigcup_{s \in \Scal_n}\bigcup \barB(s)$
 is contained in a floor of level $n-1$.
Additionally, every base floor of level $n$ is contained 
 in a base floor of level $n-1$.
The maximum diameters of the towers
 $\barB(s)$ ($s \in \Scal_n$)
 tend to zero as $n \to \infty$ owing to the last condition of $\Pcal_n$.
Thus, we get a decomposition by towers
 $X = \bigcup_{s \in \Scal_n}\bigcup \barB(s)$.
We rewrite this as
 $X = \bigcup_{1 \le i \le k(n)}\bigcup \barB(n,i)$, where $k(n) = \abs{\Scal_n}$.
Through this induction, we obtain a K--R refinement
 $\Xi : X = \bigcup_{1 \le i \le k(n)}\bigcup \barB(n,i) (n \ge 0)$.
Finally, from the construction, we obtain $B = B_{\Xi}$.
\end{proof}

\begin{rem}\label{rem:telescoping-KR-refinement}
Let $(X,f,B)$ be a triple of a zero-dimensional system 
 $(X,f)$ and a quasi-section $B \subseteq X$.
Then, it follows that there exists a unique equivalence class of
 K--R refinements that are generated by telescopings.
In other words,
 if $\Xi_1 : X = \bigcup_{1 \le i \le k_1(n)}\overline{B_1}(n,i)$
 and $\Xi_2 : X = \bigcup_{1 \le i \le k_2(n)}\overline{B_2}(n,i)$ are
 K--R refinements such that $B = B_{\Xi_1} = B_{\Xi_2}$,
 then there exists a K--R refinement
 $\Xi : X = \bigcup_{1 \le i \le k(n)}\bigcup \barB(n,i)$ (with $B = B_{\Xi}$)
 that has two telescopings $\Xi'_1$ and $\Xi'_2$ with
 $\Xi'_1$ being a telescoping of $\Xi_1$
 and $\Xi'_2$ being a telescoping of $\Xi_2$.
Thus, for a fixed triple $(X,f,B)$, all K--R refinements $\Xi$ with
 $B = B_{\Xi}$ are equivalent to each other.
Here, we do not introduce the equivalence relation of K--R refinements
 on different triples.
\end{rem}

\section{Bratteli--Vershik models and main basic results}\label{sec:main-basic-results}
In this section, we define general Bratteli--Vershik models and
 make a link with K--R refinements.
We describe \cref{prop:BV-to-KR,prop:KR-to-ordered-Bratteli-diagram}
 to check that these are equivalent notions.
Therefore, owing to \cref{thm:from-quasi-section-to-KR},
 we shall see that we have already
 done a concrete description of \cref{thm:bijectivetriplesBVmodels}.
\subsection{Bratteli--Vershik models}\label{subsec:Bratteli--Vershik-models}
A \textit{Bratteli diagram} is an infinite directed graph $(V,E)$,
 where $V$ is the vertex set and $E$ is the edge set.
The vertex set $V$ is decomposed into non-empty finite sets
 $V = V_0 \cup V_1 \cup V_2 \cup \dotsb$,
 where $V_0 = \seb v_0 \sen$ is a single point.
The edge set $E = E_1 \cup E_2 \cup \dotsb$ is also decomposed into non-empty
 finite sets.
Each $E_n$ is a set of edges from $V_{n-1}$ to $V_n$ for each $n > 0$.
Therefore, there exist two maps $r,s : E \to V$ such that $s : E_n \to V_{n-1}$
 and $r:E_n \to V_n$ for all $n \bpi$,
i.e., the \textit{source map} and the \textit{range map}, respectively.
Moreover, $s^{-1}(v) \nekuu$ for all $v \in V$ and
$r^{-1}(v) \nekuu$ for all $v \in V \setminus V_0$.
We say that $u \in V_{n-1}$ is connected to $v \in V_{n}$ if there
 exists an edge $e \in E_n$ such that $s(e) = u$ and $r(e) = v$.

We consider finite or infinite path spaces for a Bratteli diagram.
For each $0 \le n < m$, a sequence of edges
 $p = (e_{n+1},e_{n+2},\dotsc,e_m) \in \prod_{n < i \le m}E_i$ 
 with $r(e_i) = s(e_{i+1})$ for all $n < i < m$ is called a \textit{path}.
A path $p = (e_{n+1},e_{n+2},\dotsc,e_m)$ extends from one vertex $v \in V_n$
 to another vertex $v' \in V_m$ if $v = s(e_{n+1})$ and $v' = r(e_m)$.
For each $n < m$, we define
 $E_{n,m} := \seb p \in \prod_{n < i \le m}E_i \mid p \text{ is a path.} \sen.$
For $p = (e_{n+1},e_{n+2},\dotsc,e_m) \in E_{n,m}$,
 the source map $s : E_{n,m} \to V_n$ and the range map $r : E_{n,m} \to V_m$
 are defined by $s(p) = s(e_{n+1})$ and $r(p) = r(e_{m})$, respectively.
\begin{defn}\label{defn:simple}
A Bratteli diagram is \textit{simple} if for every
 $n \ge 0$ there exists an $m > n$ such that
 all vertices in $V_n$ are linked with all vertices in $V_m$ by
 paths in $E_{n,m}$.
\end{defn}
For each $n \ge 0$, an infinite path $p = (e_{n+1},e_{n+2},\dotsc)$ 
 is also defined.
For each $n \ge 0$, $E_{n,\infty}$ denotes
 the set of all infinite paths from $V_n$.
For $p = (e_{n+1},e_{n+2},\dotsc) \in E_{n,\infty}$,
 the source map $s : E_{n,\infty} \to V_n$ is defined as $s(p) = s(e_{n+1})$.
For a finite or infinite path $p = (e_{n+1},e_{n+2},\dotsc)$,
 $p(i) := e_i$ for each $n < i$ if $e_i$ is defined.
In particular, we have defined the set
 $E_{0,\infty}$.
We consider $E_{0,\infty}$ with the product topology.
Under this topology, it is a compact zero-dimensional space.

Let $(V,E)$ be a Bratteli diagram.
We say that $(V,E,\ge)$ is an \textit{ordered Bratteli diagram} if
 a partial order $\ge$ is defined on $E$ such that 
 $e, e' \in E$ are comparable if and only if $r(e) = r(e')$.
 Thus, we have a linear order on each set $r^{-1}(v)$ for each $v \in \Vp$.
The edges $r^{-1}(v)$ are numbered from $1$ to $\abs{r^{-1}(v)}$, and
 the maximal (resp. minimal) edge is denoted
 by $e(v,\max)$ (resp. $e(v,\min)$).
Let $E_{\max}$ and $E_{\min}$ denote the sets of maximal and minimal
 edges, respectively.
Thus, we obtained a partial order on path spaces by the 
 lexicographic order.
Let $(V,E,\ge)$ be an ordered Bratteli diagram.
For each $0 < n < m$ and $v \in V_m$,
the set $\seb p \in E_{n,m} \mid r(p) = v \sen$ 
 is linearly ordered by the lexicographic order, 
 i.e., for $p \ne q \in E_{n,m}$ with $r(p) = r(q)$,
 $p < q$ if and only if
 $p(k) < q(k)$ with the maximal $k \in [n+1,m]$ such that $p(k) \ne q(k)$.
Let $n_0 = 0 < n_1 < n_2 < \cdots$ be an increasing sequence of integers.
Then, by replacing $E_{n_k,n_{k+1}}$ instead of the subgraph from $V_{n_k}$
 to $V_{n_{k+1}}$ for all $k \ge 0$,
 we obtain a new ordered Bratteli diagram $(V',E',\ge)$
 called \textit{telescoping}, where $V' = \bigcup_{k \ge 0}V_{n_k}$
 and $E' = \bigcup_{k \ge 1}E_{n_{k-1},n_{k}}$ (see \cite[Definition 3.2]{GPS_95TopologicalOrbitEquivAndCstarCrossedProduct}).
For each $n \bni$, suppose that
 $p, p' \in E_{n,\infty}$ are distinct cofinal paths,
 i.e., there exists a $k > n$
 such that $p(k) \ne p'(k)$, and for all $l > k$, $p(l) = p'(l)$.
We define the lexicographic order $p < p'$ if and only if $p(k) < p'(k)$.
In particular, we have defined the lexicographic order on $E_{0,\infty}$.
This is a partial order, and $p,q \in E_{0,\infty}$ is comparable if 
 and only if $p$ and $q$ are cofinal.
Let $(V,E,\ge)$ be an ordered Bratteli diagram.
We define
\begin{fleqn}[20mm]
\begin{align*}
\ & E_{0,\infty,\min} := \seb p \in E_{0,\infty} \mid 
 p(k) \in E_{\min} \myforall k \sen, \myand\\
\ & E_{0,\infty,\max} := \seb p \in E_{0,\infty} \mid 
 p(k) \in E_{\max} \myforall k \sen.
\end{align*} 
\end{fleqn}
\begin{defn}\label{defn:oBd-properly-ordered}
An ordered Bratteli diagram $(V,E,\ge)$ is \textit{properly ordered}
 if $(V,E)$ is simple and
 $\abs{E_{0,\infty,\min}} = \abs{E_{0,\infty,\min}} = 1$.
\end{defn}
For each $p \in E_{0,\infty} \setminus E_{0,\infty,\max}$,
 there exists the least $p' > p$ with respect to the lexicographic order.
Thus, we can consider the Vershik map
\[\psi : E_{0,\infty} \setminus E_{0,\infty,\max} \to E_{0,\infty}\]
such that $\psi(p)$ is the least element with $p < \psi(p)$.
In certain classes of ordered Bratteli diagrams, the Vershik map
 can be extended to $\psi : E_{0,\infty} \to E_{0,\infty}$ continuously.
We note that this extension may not be unique.

We say that $(V,E,\ge)$ admits a \textit{continuous Vershik map}
 if the Vershik map $\psi$ can be extended to 
 $\psi : E_{0,\infty} \to E_{0,\infty}$
 with $\psi(E_{0,\infty,\max}) \subseteq E_{0,\infty,\min}$ continuously.
Specifically, we note that
\itemb
\item $\psi$ is surjective
 if and only if $\psi(E_{0,\infty,\max}) = E_{0,\infty,\min}$,
\item $\psi$ is injective if and only if
 $\psi$ is injective on $E_{0,\infty,\max}$, and
\item if $\abs{\psi^{-1}(x)} \ne 1$, then $x \in E_{0,\infty,\min}$.
\itemn
\begin{defn}\label{defn:BV-model}
If $(V,E,\ge)$ admits a Vershik map $\psi$ that is a homeomorphism,
 then we say that $(V,E,\ge,\psi)$ is a \textit{Bratteli--Vershik model}.
We say that the Bratteli--Vershik model is \textit{properly ordered}
 if $(V,E,\ge)$ is properly ordered.
\end{defn}
Let $(V,E,\ge,\psi)$ be a properly ordered Bratteli--Vershik model.
Then, it follows that $\abs{E_{0,\infty,\min}} = \abs{E_{0,\infty,\max}} = 1$
 by the definition.

\begin{defn}
We say that two
 Bratteli--Vershik models $(V,E,\ge,\psi)$ and $(V',E',\ge',\psi')$
 (with the path spaces $E_{0,\infty}$ and $E'_{0,\infty}$ respectively)
are \textit{topologically conjugate} if there exists a homeomorphism
 $\fai : E_{0,\infty} \to E'_{0,\infty}$ that satisfies
 $\fai \circ \psi = \psi' \circ \fai$ and
 $\fai(E_{0,\infty,\min}) = E'_{0,\infty,\min}$.
\end{defn}

If $(E',V',\ge)$ is a telescoping, it is natural to obtain an isomorphism
 $E_{0,\infty} = E'_{0,\infty}$ with $E_{0,\infty,\min} = E'_{0,\infty,\min}$
 and with $E_{0,\infty,\max} = E'_{0,\infty,\max}$.
Thus, we get an isomorphic zero-dimensional system $(E'_{0,\infty},\psi)$.
Two ordered Bratteli diagrams $(V^1,E^1,\ge)$ and $(V^2,E^2,\ge)$ are 
 \textit{equivalent} if there exists an ordered Bratteli diagram 
 $(V,E,\ge)$ such that
\itemb
\item $(V^1,E^1,\ge)$ and $(V,E,\ge)$ have a common telescoping and
\item $(V^2,E^2,\ge)$ and $(V,E,\ge)$ have a common telescoping.
\itemn
We note that, even if the two ordered Bratteli diagrams $(V^1,E^1,\ge)$ and $(V^2,E^2,\ge)$ are 
 \textit{equivalent},
 we cannot yet discuss the isomorphism of the Vershik maps,
 because we need to consider not only the existence but also the uniqueness of
 the homeomorphic extension of the Vershik maps.
In general, a zero-dimensional system $(X,f)$ is said to have
 a Bratteli--Vershik model if $(X,f)$ is topologically conjugate to
 $(E_{0,\infty},\psi)$ for a certain Bratteli--Vershik model $(V,E,\ge,\psi)$.

\subsection{A link with K--R refinements}\label{subsec:alinkKR}

In this subsection, we make a link with K--R refinements.
The next notation combines
 the ordered Bratteli diagrams with the notion of K--R refinements.

\begin{nota}\label{nota:paths-from-vertices}
Let $(V,E,\ge,\psi)$ be a Bratteli--Vershik model, $n > 0$, and $v \in V_n$.
We abbreviate $P(v) := \seb p \in E_{0,n} \mid r(p) = v \sen$.
We define $h(v) := \abs{P(v)}$ and
 write $P(v) = \seb p(v,0) < p(v,1) < \dotsb < p(v,h(v)-1) \sen$.
Let $U(v,j) := \seb x = (e_{x,1}, e_{x,2},\dotsc) \in E_{0,\infty} \mid 
 (e_{x,1}, e_{x,2}, \dotsc, e_{x,n}) = p(v,j) \sen$
  for all $0 \le j < h(v)$.
We denote $B(v) := U(v,0)$ and $\barB(v) := \seb U(v,j) \mid 0 \le j < h(v) \sen$.
Then, for any Vershik map $\psi$, $\barB(v)$
 is a tower, i.e., $\psi^j(B(v)) = U(v,j)$ for all $0 \le j < h(v)$.
Clearly, we get a decomposition by towers
 $X = \bigcup_{v \in V_n}\bigcup \barB(v)$ ($n \ge 0$).
\end{nota}

\begin{prop}[From Bratteli--Vershik model to K--R refinement]\label{prop:BV-to-KR}
Let $(V,E,\ge,\psi)$ be a Bratteli--Vershik model.
Then, the decomposition by towers
 $E_{0,\infty} = \bigcup_{v \in V_n}\bigcup \barB(v)$ ($n \ge 0$)
 is a K--R refinement.
\end{prop}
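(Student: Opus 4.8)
The plan is to read off the index set $k(n) := \abs{V_n}$, indexing the towers at level $n$ by the vertices $v \in V_n$, and then to verify the three defining conditions of a K--R refinement together with the normalization at $n = 0$. By \cref{nota:paths-from-vertices} we already know that for each $n \ge 0$ the family $\seb \barB(v) \mid v \in V_n \sen$ is a decomposition by towers with $\psi^j(B(v)) = U(v,j)$ for $0 \le j < l(v)$; concretely, $U(v,j)$ is the cylinder set of paths whose initial segment of length $n$ equals $p(v,j)$, and the map $x \mapsto (e_{x,1},\dotsc,e_{x,n})$ exhibits $\seb U(v,j) \mid v \in V_n, 0 \le j < l(v) \sen$ as the partition of $E_{0,\infty}$ into depth-$n$ cylinders. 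For $n = 0$ we have $V_0 = \seb v_0 \sen$ and the only path to $v_0$ is the empty path, so $k(0) = 1$, $l(v_0) = 1$, and $B(v_0) = \barB(v_0) = E_{0,\infty}$, which is exactly the required normalization.

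For the two nesting conditions, fix $m > n \ge 0$ and $w \in V_m$. The key observation is that truncating a path $p(w,j) \in E_{0,m}$ to its first $n$ edges yields a path $q \in E_{0,n}$ with $r(q) = v$ for some $v \in V_n$, and $q = p(v,j')$ for a unique $j'$; since passing to a longer initial segment only shrinks the corresponding cylinder, $U(w,j) \subseteq U(v,j')$. This gives the floor-nesting condition. For the base-nesting condition I would use the description of minimal paths forced by the lexicographic order: because the order compares paths at the highest differing level, the minimal path $p(w,0)$ into $w$ consists entirely of minimal edges, so its truncation to the first $n$ edges is again the all-minimal path $p(v,0)$ into the intermediate vertex $v$. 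Hence $B(w) = U(w,0) \subseteq U(v,0) = B(v)$, as required.

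It remains to treat the vanishing of the diameters, which is where the only real subtlety lies. For $0 \le j < l(v)$ each floor $U(v,j)$ is a depth-$n$ cylinder, so its diameter is at most the mesh of the partition of $E_{0,\infty}$ into depth-$n$ cylinders, and this mesh tends to $0$ as $n \to \infty$ by definition of the product topology on a compact zero-dimensional space. The one point that needs the extra argument is the top term $i = h$ in the diameter convention, namely $\psi^{l(v)}(B(v)) = \psi\big(U(v,l(v)-1)\big)$, which is the image under $\psi$ of the top floor rather than a depth-$n$ cylinder itself. Here I would invoke uniform continuity of the homeomorphism $\psi$ on the compact space $E_{0,\infty}$: given $\ep > 0$ choose $\delta > 0$ with $d(a,b) < \delta \implies d(\psi(a),\psi(b)) < \ep$, and then take $n$ large enough that every depth-$n$ cylinder has diameter below $\delta$; this forces $\diam\big(\psi(U(v,l(v)-1))\big) < \ep$ for every $v \in V_n$. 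Consequently the maximal diameter $\epsilon_n$ of the towers, top term included, tends to $0$, completing the verification that $E_{0,\infty} = \bigcup_{v \in V_n}\barB(v)$ is a K--R refinement. The main obstacle is exactly this last point: without the uniform-continuity argument one cannot directly bound the extra top floor, since $\psi(U(v,l(v)-1))$ is generally spread across several base cylinders of level $n$.
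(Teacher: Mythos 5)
Your proof is correct and follows essentially the same route as the paper's: the floor-nesting and base-nesting conditions are verified by truncating the paths $p(v,j)$ and observing that the minimal path into $v$ consists entirely of minimal edges, exactly as in the paper. The only difference is that the paper dismisses the diameter condition as evident from the product topology, whereas you correctly isolate the one genuinely non-cylindrical term $\psi^{l(v)}(B(v))$ (the $i=h$ term in the paper's diameter convention) and handle it by uniform continuity of $\psi$ --- a welcome extra step of rigor, but not a different argument.
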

\begin{proof}
First, let $m > n \ge 0$, $v \in V_m$, and $0 \le j < h(v)$.
Then, $\psi^j(B(v)) = U(v,j)$ in the way of 
 \cref{nota:paths-from-vertices}.
Therefore, we get
 $U(v,j) = \seb x = (e_{x,1}, e_{x,2},\dotsc) \in E_{0,\infty} \mid 
 (e_{x,1}, e_{x,2}, \dotsc, e_{x,m}) = p(v,j) \sen$.
If $p(v,j) = (e_1, e_2, \dotsc, e_m)$, then
 there exists $v' \in V_n$ and $0 \le j' < h(v')$ such that
 $(e_1,e_2,\dotsc, e_n) = p(v',j')$.
Thus, we get
 $U(v,j) \subseteq U(v',j')$, as desired.
Second, let $m > n \ge 0$ and $v \in V_n$.
Then, $B(v) = \seb x = (e_{x,1}, e_{x,2},\dotsc) \in E_{0,\infty} \mid 
 (e_{x,1}, e_{x,2}, \dotsc, e_{x,m}) = p(v,0) \sen$.
If $p(v,0) = (e_1,e_2,\dotsc, e_m)$, then
 we get $e_i \in E_{\min}$ ($1 \le i \le m$).
Thus, if $v' \in r(e_n)$, then we obtain $B(v) \subseteq B(v')$,
 as desired.
Evidently, because the topology in $E_{0,\infty}$ is induced by the product topology,
 the third condition follows.
\end{proof}

The converse is stated in the following.
\begin{prop}[From K--R refinement to Bratteli--Vershik model]\label{prop:KR-to-ordered-Bratteli-diagram}
Let $(X,f)$ be a zero-dimensional system and
 $\Xi : X = \bigcup_{1 \le i \le k(n)}\bigcup \barB(n,i)$ be a K--R refinement.
Then, we can obtain a Bratteli--Vershik model $(V,E,\ge,\psi)$
 such that
\itemb
\item there exists a topological conjugacy
 $\fai : (X,f) \to (E_{0,\infty},\psi)$,
\item $\fai(B_{\Xi}) = E_{0,\infty,\min}$,
\item $\fai(f^{-1}(B_{\Xi})) = E_{0,\infty,\max}$, and
\item the decomposition by towers $E_{0,\infty} = \bigcup_{v \in V_n}\bigcup \barB(v)$ ($n \ge 0$) matches $\Xi$ by $\fai$.
\itemn
\end{prop}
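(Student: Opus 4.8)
The plan is to reverse the passage of \cref{prop:BV-to-KR}: read an ordered Bratteli diagram directly off the combinatorics of how the towers of $\Xi$ at consecutive levels are nested, and then let $\fai$ encode each point by the sequence of ``visits'' its orbit makes to the bases.

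First I would build $(V,E,\ge)$. Put $V_n := \{1,\dots,k(n)\}$, so that the vertices of level $n$ are the towers $\barB(n,i)$; since $k(0)=1$ this gives the required singleton $V_0$. For the edges from $V_{n-1}$ to $V_n$, fix a tower $\barB(n,i)$ and look at those heights $0 \le j < l(n,i)$ for which the floor $f^{j}(B(n,i))$ lies inside $B_{\Xi}(n-1)$, i.e.\ inside a base of level $n-1$; listing these heights as $0=t_0<t_1<\dots<t_{r-1}$ and letting $j_s$ be the index with $f^{t_s}(B(n,i))\subseteq B(n-1,j_s)$, I introduce one edge $e_s$ with $s(e_s)=j_s$, $r(e_s)=i$, ordered by $e_0<e_1<\dots<e_{r-1}$. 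The minimal edge into $i$ then records the base-in-base nesting $B(n,i)\subseteq B(n-1,j_0)$. The two routine bookkeeping facts I would verify here, both from the Kakutani--Rohlin nesting together with disjointness of floors, are: (i) once a floor of $\barB(n,i)$ sits in the top floor of a level-$(n-1)$ tower, the next floor up is again a level-$(n-1)$ base, so $t_{s+1}=t_s+l(n-1,j_s)$; and consequently (ii) $l(n,i)=\sum_{s} l(n-1,j_s)$, matching the height bookkeeping of \cref{nota:paths-from-vertices}.

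Next I would define $\fai:X\to E_{0,\infty}$. Each $x$ lies in a unique floor of each level, and the floor at level $n$ is contained in the floor at level $n-1$; the edge $\fai(x)(n)\in E_n$ is the one whose visit-interval $[t_s,t_{s+1})$ contains the height of $x$ in its level-$n$ tower, which forces $r(\fai(x)(n))=i_n(x)$ and $s(\fai(x)(n))=i_{n-1}(x)$, so $\fai(x)$ is a genuine infinite path. The finite initial path $\fai(x)|_{[1,n]}$ has lexicographic rank exactly the floor index of $x$ at level $n$, which is the content of $U(v,j)=\psi^{j}(B(v))$ in \cref{nota:paths-from-vertices}. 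From this, $\fai$ is injective because the floors shrink to points ($\epsilon_n\to 0$); surjective because a path determines a nested sequence of nonempty clopen floors whose intersection is a single point, by compactness; and continuous because preimages of cylinders are floors, which are clopen and generate the topology. A continuous bijection of compact metric spaces is a homeomorphism, and by construction $\fai(f^{j}(B(n,i)))=U(v,j)$, giving the tower-matching clause.

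The heart of the argument, and the step I expect to be the main obstacle, is the behaviour of $f$ at the maximal paths, needed both to identify $E_{0,\infty,\max}$ and to know that $\psi:=\fai\circ f\circ\fai^{-1}$ is an honest Vershik map with $\psi(E_{0,\infty,\max})\subseteq E_{0,\infty,\min}$. The clean fact that unlocks everything is that at each level $f$ carries the union of top floors onto $B_{\Xi}(n)$: since $f(f^{j}(B(n,i)))=f^{j+1}(B(n,i))$ sends every non-top floor to a non-base floor bijectively, and $f$ is a bijection of $X$, taking complements yields $f(\bigcup_i f^{l(n,i)-1}(B(n,i)))=B_{\Xi}(n)$ and dually $f^{-1}(B_{\Xi}(n))=\bigcup_i f^{l(n,i)-1}(B(n,i))$. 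With this in hand I would argue as follows. For $x$ with $\fai(x)\notin E_{0,\infty,\max}$, $f(x)$ is obtained by incrementing the floor index at the lowest level at which $x$ is not already at the top and resetting the lower levels to their bases, which is exactly the lexicographic successor of $\fai(x)$; hence $\fai\circ f=\psi\circ\fai$ with $\psi$ the successor map there. For $x$ with $\fai(x)\in E_{0,\infty,\max}$, i.e.\ $x$ at the top of its tower at every level, the displayed identity gives $f(x)\in B_{\Xi}(n)$ for all $n$, hence $f(x)\in\bigcap_n B_{\Xi}(n)=B_{\Xi}$, and since $\fai(x)\in E_{0,\infty,\min}$ holds exactly when $x\in B_{\Xi}(n)$ for all $n$, this yields $\fai(f(x))\in E_{0,\infty,\min}$. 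Running the same identity backwards shows $\fai(x)\in E_{0,\infty,\max}$ precisely when $f(x)\in B_{\Xi}$, so $E_{0,\infty,\max}=\fai(f^{-1}(B_{\Xi}))$, while $E_{0,\infty,\min}=\fai(B_{\Xi})$. Finally $\psi=\fai\circ f\circ\fai^{-1}$ is a homeomorphism as a conjugate of $f$, agrees with the lexicographic successor off $E_{0,\infty,\max}$, and maps $E_{0,\infty,\max}$ into $E_{0,\infty,\min}$, so $(V,E,\ge,\psi)$ is a Bratteli--Vershik model and $\fai$ is the required topological conjugacy.
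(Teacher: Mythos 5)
Your proposal is correct and follows essentially the same route as the paper: vertices are the towers of each level, edges record the successive visits of a level-$(n{+}1)$ tower to the level-$n$ towers with the visit order, $\fai$ encodes a point by its nested floors, and $\psi:=\fai\circ f\circ\fai^{-1}$ is checked to be the lexicographic successor off $E_{0,\infty,\max}$, with $E_{0,\infty,\max}=\fai(f^{-1}(B_\Xi))$ obtained from the identity $B_\Xi(n)=\bigcup_i f^{l(n,i)}(B(n,i))$. The only difference is presentational: you make explicit the bookkeeping (that each new visit enters at a base, and that heights add up) which the paper leaves implicit.
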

\begin{proof}
First, we construct a Bratteli--Vershik model $(V,E,\ge, \psi)$
 with which the $\psi : E_{0,\infty} \to E_{0,\infty}$
 is topologically conjugate to $f: X \to X$.
Let $V_0$ be a one-point set that consists of the unique tower $\barB(0,1)$
 with height $1$ and base $B(0,1) = X$.
Suppose that we have constructed $V_{n-1}$ ($n \ge 1$) to be the set of towers
 of level $n-1$.
Then, $V_n$ is the set of towers of level $n$, i.e.,
 $V_n = \seb \barB(n,i) \mid 1 \le i \le k(n) \sen$.
Next, we have to construct the edge set $E_n$.
To do so, fix $v = \barB(n,i) \in V_n$ ($1 \le i \le k(n)$) arbitrarily.
Then, for an arbitrary point $x \in B(n,i)$,
 the finite orbit $x, f(x), f^2(x), \dotsc, f^{h(n,i)-1}(x)$
 passes through the same sequence 
 $\barB(n-1,i_1), \barB(n-1,i_2), \dotsc, \barB(n-1,i_{a(v)})$, 
 of towers of level $n-1$ successively. 
We make $a(v)$ edges $e(v,j)$ ($1 \le j \le a(v)$)
 from $\barB(n-1,i_j)$ to $v = \barB(n,i)$ for each $1 \le j \le a(v)$
 with the linear order such that $e(v,j) \le e(v,j')$ if and only if
 $j \le j'$.
Thus, $s(e(v,j)) = \barB(n-1,i_j)$ and $r(e(v,j)) = v$ for each $v \in V_n$ 
 and each $1 \le j \le a(v)$.
We define
 $E_n := \seb e(v,j) \mid v \in V_n, 1 \le j \le a(v) \sen$.
Thus, we have constructed an ordered Bratteli diagram $(V,E,\ge)$.
Next, we make a map $\fai : X \to E_{0,\infty}$ as follows.
Take an $x \in X$ and $n \ge 1$ arbitrarily.
Then, there exists a unique floor $f^{j(x)}(B(n,i(x))) \ni x$
 with $1 \le i(x) \le k(n)$ and $0 \le j(x) < h(n,i(x))$.
Let $v(x) := \barB(n,i(x)) \in V_n$.
Take the unique $y \in B(n,i(x))$ such that $f^{j(x)}(y) = x$.
Then, the finite orbit $y, f(y), f^2(y), \dots, f^{j(x)}(y)$ passes
 the sequence $\barB(n-1,i_1), \barB(n-1,i_2), \dotsc, \barB(n-1,i_{b(x)})$
 of towers successively and we get $x \in \barB(n-1,i_{b(x)})$.
We define $e_{x,n} := e(v(x),b(x)) \in E_n$.
Because $n \ge 1$ is arbitrary,
 we define $\fai(x) := (e_{x,1},e_{x,2},\dotsc) \in E_{0,\infty}$.
Thus, we defined a map $\fai : X \to E_{0,\infty}$.
The continuity of $\fai$ is evident from the construction.
The injectivity of $\fai$ follows from the third condition of K--R refinement.
To show the surjectivity,
 take an arbitrary $p = (e_1,e_2,\dotsc) \in E_{0,\infty}$, and
let $n \ge 1$.
Then, the sequence $(e_1,e_2,\dotsc,e_n)$ indicates a unique floor $U(p,n)$ in 
 the tower $r(e_n)$.
Because $U(p,1) \supseteq U(p,2) \supseteq \cdots$,
 the third condition of K--R refinement
 implies that there exists a unique $x \in X$
 such that $\seb x \sen = \bigcap_{n \ge 1}U(p,n)$.
It is now evident that $\fai(x) = p$.
Thus, we find that $\fai$ is a homeomorphism.
We define 
 $\psi := \fai \circ f \circ \fai^{-1} : E_{0,\infty} \to E_{0,\infty}$.
From the construction, it follows that $\fai(B_\Xi) = E_{0,\infty,\min}$.
It is clear that if $x \notin f^{-1}(B_\Xi)$,
 then $x$ is not on the top floor
 at a certain level $n$, i.e., $\fai(x) \notin E_{0,\infty,\max}$.
Thus, evidently, we get
 $\fai(X \setminus f^{-1}(B_\Xi))
 \subseteq E_{0,\infty} \setminus E_{0,\infty,\max}$, i.e., we get
 $\fai(f^{-1}(B_\Xi)) \supseteq E_{0,\infty,\max}$.
Conversely, let $x \in f^{-1}(B_\Xi)$.
Then, $x$ is on the top floor of a certain tower at each level $n$.
This implies that $e_{x,n} \in E_{\max}$ for all $n \ge 1$.
Therefore, we get $\fai(x) \in E_{0,\infty,\max}$.
Thus, we obtain $\fai(f^{-1}(B_\Xi)) = E_{0,\infty,\max}$.
It follows that $\psi(E_{0,\infty,\max}) = E_{0,\infty,\min}$.
From the construction, the third condition of this proposition is satisfied.
It is now evident that $\psi$ is determined by the lexicographic order
 on the set $E_{0,\infty} \setminus E_{0,\infty,\max}$.
\end{proof}

Let $(V,E,\ge,\psi)$ be a properly ordered Bratteli--Vershik model.
Then, it follows that $(E_{0,\infty},\psi)$ is minimal.
This is obtained by simplicity:
 for each $n \ge 0$ there exists an $m > n$
 such that
 all the towers of level $m$ wind all towers of level $n$.
Conversely, a minimal system has a properly ordered Bratteli--Vershik model
 (see \cite{HERMAN_1992OrdBratteliDiagDimGroupTopDyn}).

\subsection{Subsystems and Bratteli--Vershik models}
Let $(X,f)=(E_{0,\infty},\psi)$ with some 
 Bratteli--Vershik model $(V,E,\ge,\psi)$, and
 $\Xi : X = \bigcup_{1 \le i \le k(n)}\bigcup \barB(n,i)$ be the
 corresponding K--R refinement.
Let $(Y,f|_Y)$ be a subsystem of $(X,f)$.
We shall construct the corresponding Bratteli--Vershik model with
 a thorough check.
If we write as $B = E_{0,\infty,\min}$, then 
 we get an isomorphism $(X,f,B) = (E_{0,\infty},\psi,E_{0,\infty,\min})$.
By the correspondence
 with respect to \cref{prop:BV-to-KR,prop:KR-to-ordered-Bratteli-diagram},
We consider
 $V_n = \seb \barB(n,i) \mid 1 \le i \le k(n) \sen$ for each $n \ge 0$,
 and $V = \bigcup_{n \ge 0}V_n$.
The edge set $E$ is also constructed
 according to \cref{prop:KR-to-ordered-Bratteli-diagram}.
Suppose that $B(n,i)$ is a base floor of level $n$ with $B(n,i) \cap Y \nekuu$.
Then, it is evident that $B' := B(n,i) \cap Y$ is a base floor
 of the tower $\seb B', f(B'), \dotsc, f^{h(n,i)-1}(B')\sen$
 with respect to $(Y,f|_Y)$.
We renumber $1 \le i \le k(n)$ if necessary
 to get some $k(Y,n) \le k(n)$ with
 $B(n,i) \cap Y \nekuu \Leftrightarrow i \le k(Y,n)$.
We define $B(Y,n,i) := B(n,i) \cap Y$ for $1 \le i \le k(Y,n)$.
Thus, we get a K--R refinement
 $\Xi(Y) : \bigcup_{1 \le i \le k(Y,n)}\bigcup \barB(Y,n,i)$
 of $(Y, f|_Y)$.
Note that for $m > n$ and $1 \le j \le k(Y,n)$, a tower $\barB(m,j)$ 
 passes through $\barB(n,i)$ if and only if 
 the tower $\barB(Y,m,j)$ passes through a tower $\barB(Y,n,i)$.
Precisely, for $m > n$, $f^h(B(m,j))\cap B(n,i) \nekuu$
 $(1 \le j \le k(Y,m), 0 \le h < h(m,j))$
 if and only if $1 \le i \le k(Y,n)$ and 
 $f^h(B(Y,m,j)) \cap B(Y,n,i) \nekuu$.
Let $V(Y)_n := \seb \barB(n,i) \mid 1 \le i \le k(Y,n) \sen$
 for all $n \ge 0$ and $V(Y) := \bigcup_{n \ge 0}V(Y)_n$.
Thus, we get that $V(Y) \subseteq V$.
For $n \ge 1$,
 we define $E(Y)_n := \bigcup_{v \in V(Y)_n}r^{-1}(v) \subseteq E_n$.
 and $E(Y) := \bigcup_{n \ge 1}E(Y)_n$.
Thus, we get that $E(Y) \subseteq E$.
Because $r^{-1}(v) \subseteq E$ for all $V(Y)\setminus V(Y)_0$,
 we can use the same order on $E(Y)$ as $E$.
We define as $\psi(Y) := \psi|_{E(Y)_{0,\infty}}$.
Thus, we get a Bratteli--Vershik model $(V(Y),E(Y),\ge,\psi(Y))$.
It is evident that $(Y,f|_Y) = (E(Y)_{0,\infty},\psi(Y))$.
It is also evident that $(Y,f|_Y, B \cap Y)$
 is isomorphic to $(E(Y)_{0,\infty},\psi(Y),E(Y)_{0,\infty,\min})$.
\begin{defn}\label{defn:BV-quasi-simple}
We say that
 a Bratteli--Vershik model $(V,E,\ge, \psi)$ is \textit{quasi-simple}
 if for each $M \in \Mcal_{\psi}$, $(V(M), E(M),\ge, \psi(M))$
 is a properly ordered Bratteli--Vershik model.
\end{defn}

\begin{thm}\label{thm:quasi-simple}
A Bratteli--Vershik model $(V,E,\ge,\psi)$ is quasi-simple
 if and only if the set $E_{0,\infty,\min}$ is a quasi-simple
 quasi-section.
\end{thm}
\begin{proof}
Let $(V,E,\ge,\psi)$ be a Bratteli--Vershik model.
We write as $(X,f) = (E_{0,\infty},\psi)$ and $B = E_{0,\infty,\min}$.
Suppose that $(V,E,\ge,\psi)$ is quasi-simple.
Then, we have to show that $B$ is a quasi-simple quasi-section.
To see this, let $M \in \Mcal_f$.
Then, $(V(M),E(M),\ge,\psi(M))$ is a properly ordered Bratteli--Vershik
 model.
It follows that $(V(M),E(M))$ is simple and
 $\abs{E(M)_{0,\infty,\min}} = \abs{E(M)_{0,\infty,\max}} = 1$.
Because $(M,f|_M, B \cap M)$
 is isomorphic to $(E(M)_{0,\infty},\psi(M),E(M)_{0,\infty,\min})$,
 $B \cap M$ is a single point, as desired.
Conversely, suppose that $B$ is a quasi-simple quasi-section of 
 $(X,f,B)$.
Then, it follows that $\abs{B \cap M} = 1$ for each $M \in \Mcal_f$.
Let $M \in \Mcal_f$.
We have to show that $(V(M),E(M),\ge,\psi(M))$ is properly ordered.
The simplicity of $(V(M),E(M))$ follows from the fact that $M$ is a
 minimal set.
Because $(M,f|_M, B \cap M)$
 is isomorphic to $(E(M)_{0,\infty},\psi(M),E(M)_{0,\infty,\min})$,
 it follows that $\abs{E(M)_{0,\infty,\min}} = 1$.
Finally, $\abs{E(M)_{0,\infty,\max}} = 1$ follows from the fact 
 that $\psi$ is a homeomorphism.
\end{proof}

The quasi-simpleness condition does not imply
 that $E_{0,\infty,\min}$ is a basic set
 (see \cref{rem:quasi-simple-may-not-closing} below).

\subsection{The proofs}\label{subsec:proofofbasicresults}
In this subsection, we finish our complete proofs of \cref{thm:bijectivetriplesBVmodels,thm:bijectivebasicsetsBVmodelswithclosingproperty,thm:bijectiveqsbasicsetsBVmodelswithssclosingproperty}.

\vspace{3mm}

\noindent \textit{Proof of \cref{thm:bijectivetriplesBVmodels}}.

Let $(V,E,\ge,\psi)$ be a Bratteli--Vershik model.
Then, it is evident that $E_{0,\infty,\min}$ is a quasi-section.
To show the converse,
 let $(X,f,B)$ be a triple of a zero-dimensional system and a quasi-section.
Combining
 \cref{thm:from-quasi-section-to-KR,prop:KR-to-ordered-Bratteli-diagram},
 we obtain a Bratteli--Vershik model
 $(V,E,\ge,\psi)$
 such that $B = E_{0,\infty,\min}$ with respect to a topological conjugacy.
\qed

\vspace{5mm}

To see the correspondence between Bratteli--Vershik models and basic sets,
 we list a definition and a result from our previous paper
\cite{Shimomura_2020AiMBratteliVershikModelsAndGraphCoveringModels}
 as follows:
\begin{defn}\label{defn:keeping-constant}
Let $(V,E)$ be a Bratteli diagram and $n \bni$.
We say that an infinite path $(e_{n+1},e_{n+2},\dotsc) \in E_{n,\infty}$
 is \textit{constant}
 if $\abs{r^{-1}(r(e_i))} = 1$ for all $i > n$.
A Bratteli--Vershik model $(V,E,\ge,\psi)$
 has the \textit{closing property} 
 if, for every constant path $(e_{n+1},e_{n+2}, \dotsc) \in E_{n,\infty}$
 with $n \bni$,
 the set $\bigcap_{m > n}\bigcup \barU(s(e_m))$ is a periodic orbit
 (of least period $h(s(e_{n+1}))$).
\end{defn}

\begin{rem}\label{rem:quasi-simple-may-not-closing}
Even if $(V,E,\ge, \psi)$ is quasi-simple,
 it may not have the closing property,
 because a constant path $(e_{n+1},e_{n+2},\dotsc) \in E_{n,\infty}$
 may correspond to a finite segment of an aperiodic orbit that is not 
 contained in any minimal set.
Therefore, this orbit enters $E_{0,\infty,\min}$ multiple times.
\end{rem}

We list a theorem from \cite{Shimomura_2020AiMBratteliVershikModelsAndGraphCoveringModels} without a proof.
\begin{thm}[Theorem 4.19 in \cite{Shimomura_2020AiMBratteliVershikModelsAndGraphCoveringModels}]\label{thm:closing-basic}
A Bratteli--Vershik model $(V,E,\ge,\psi)$ has the closing property
 if and only if the set $E_{0,\infty,\min}$ is a basic set.
\end{thm}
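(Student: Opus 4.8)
The plan is to reduce the statement to the single condition that every orbit meets $E_{0,\infty,\min}$ at most once, and then to tie that condition to the closing property through an analysis of constant paths. By \cref{thm:from-BV-to-quasi-section}, the set $E_{0,\infty,\min}$ is always a quasi-section, so every closed and open $U \supseteq E_{0,\infty,\min}$ is automatically a complete section. Hence, comparing with \cref{defn:basic-set}, the set $E_{0,\infty,\min}$ is a basic set if and only if $\abs{O(x) \cap E_{0,\infty,\min}} \le 1$ for every $x \in E_{0,\infty}$. It therefore suffices to prove that the closing property holds if and only if no orbit contains two distinct minimal paths.

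The geometric heart of the argument is the following picture of a constant path. Let $(e_{n+1},e_{n+2},\dotsc) \in E_{n,\infty}$ be constant, put $w_i := s(e_{i+1})$, and note that a unique incoming edge is simultaneously minimal and maximal. Since $\abs{r^{-1}(r(e_i))} = 1$ for $i > n$, the heights satisfy $l(w_i) = l(w_n) =: l$ for all $i \ge n$, the bases $B(w_i)$ are nested and shrink to a single point $y$, and one computes $\bigcap_{m>n}\barB(w_{m-1}) = \seb y, \psi(y), \dotsc, \psi^{l-1}(y) \sen$, a set of $l$ distinct points. Because $y$ sits on the base of every $\barB(w_i)$ and the tail edges are minimal, $y \in E_{0,\infty,\min}$; because $\psi^{l-1}(y)$ sits on the top floor and the tail edges are also maximal, $\psi^{l-1}(y) \in E_{0,\infty,\max}$; and since $\psi(E_{0,\infty,\max}) = E_{0,\infty,\min}$ for a Bratteli--Vershik model, $\psi^{l}(y) \in E_{0,\infty,\min}$. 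As the $l$ listed points are distinct, this finite set is a periodic orbit of least period $l$ exactly when $\psi^{l}(y) = y$. Thus the closing property fails at this path if and only if $\psi^{l}(y) \ne y$, in which case $y$ and $\psi^{l}(y)$ are two distinct minimal paths on the orbit $O(y)$. This already yields the implication ``not closing $\Rightarrow$ not basic''.

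For the converse I would start from two distinct minimal paths on a common orbit, say $p$ and $q = \psi^{m}(p) \in E_{0,\infty,\min}$ with $q \ne p$ and $m \ge 1$, and manufacture a constant path violating periodicity. Writing $u_n := r(p(n))$, the point $p$ lies on the base of $\barB(u_n)$, so $\psi^{j}(p)$ lies on floor $j$ of $\barB(u_n)$ for $0 \le j < l(u_n)$; were $l(u_n) > m$, the path $q = \psi^{m}(p)$ would sit on an interior floor and could not be minimal. Hence $l(u_n) \le m$ for all $n$. Since the heights $l(u_n)$ are nondecreasing in $n$ (each $l(u_{n+1})$ dominates the summand $l(s(p(n+1))) = l(u_n)$), they are eventually constant, say equal to $h$ for $n \ge N$; equality of consecutive heights forces $\abs{r^{-1}(u_{n+1})} = 1$, so the tail $(p(N+1),p(N+2),\dotsc)$ is a constant path with $y = p$. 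If the closing property held we would get $\psi^{h}(p) = p$, whence $q = \psi^{m}(p) = \psi^{m \bmod h}(p)$; as the $h$ points $p,\psi(p),\dotsc,\psi^{h-1}(p)$ are distinct and $q$ is minimal yet distinct from $p$, the residue $m \bmod h$ would index a nonzero interior floor carrying a minimal path, which is impossible. Therefore the closing property fails, proving ``not basic $\Rightarrow$ not closing''.

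The main obstacle is the second direction: passing from the merely dynamical hypothesis that two minimal paths share an orbit to the combinatorial object of a constant path requires the height-boundedness estimate $l(u_n) \le m$ together with the monotonicity and eventual stabilization of the heights. The remaining bookkeeping --- identifying the base point of a constant-height stack of towers as a minimal path, its top as a maximal path, and translating $\psi(E_{0,\infty,\max}) = E_{0,\infty,\min}$ into the single return equation $\psi^{l}(y) = y$ --- is routine once this picture is in place.
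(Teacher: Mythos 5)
Your proposal is correct, but note that the paper does not actually prove this statement: it is imported verbatim as Theorem 4.19 of \cite{Shimomura_2020AiMBratteliVershikModelsAndGraphCoveringModels}, so there is no internal proof to compare against. Your argument is a legitimate self-contained proof. The reduction via \cref{thm:from-BV-to-quasi-section} to the single condition $\abs{O(x)\cap E_{0,\infty,\min}}\le 1$ is exactly right, and both implications go through: for a constant path the heights stabilize at $l$, the nested bases shrink to a point $y\in E_{0,\infty,\min}$ with $\psi^{l-1}(y)\in E_{0,\infty,\max}$, and the closing condition at that path is equivalent to $\psi^{l}(y)=y$; conversely, two distinct minimal paths $p$ and $\psi^{m}(p)$ force $l(r(p(n)))\le m$, and the bound plus the monotonicity $l(u_{n+1})=\sum_{e\in r^{-1}(u_{n+1})}l(s(e))\ge l(u_n)$ yields a constant tail, which is the key step. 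Two small points you use implicitly and should state: (i) the lexicographic minimum of $P(v)$ is the path all of whose edges are minimal, so that $p\in E_{0,\infty,\min}$ is equivalent to $p\in B(r(p(n)))$ for every $n$ (this is what makes ``$q$ sits on an interior floor, hence is not minimal'' valid); and (ii) in the first direction, since the $l$ points of $\bigcap_{m>n}\barB(s(e_m))$ lie in pairwise disjoint floors, the set is a periodic orbit if and only if $\psi^{l}(y)=y$, in which case the least period is automatically $l$ --- this is needed to conclude that failure of closing at a given constant path is exactly the inequality $\psi^{l}(y)\ne y$ and nothing else.
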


\vspace{3mm}

\noindent \textit{Proof of \cref{thm:bijectivebasicsetsBVmodelswithclosingproperty}}

Let $(V,E,\ge,\psi)$ be a Bratteli--Vershik model with closing property.
Then, by \cref{thm:closing-basic}, it follows that
 $E_{0,\infty,\min}$ is a basic set.
To show the converse,
 let $(X,f,B)$ be a triple of a zero-dimensional system and a basic set.
Combining
 \cref{thm:from-quasi-section-to-KR,prop:KR-to-ordered-Bratteli-diagram},
 we obtain a Bratteli--Vershik model
 $(V,E,\ge,\psi)$
 such that $B = E_{0,\infty,\min}$ with respect to a topological conjugacy.
Because $E_{0,\infty,\min}$ is a basic set, it follows that
 $(V,E,\ge,\psi)$ has closing property by \cref{thm:closing-basic}.
\qed

\vspace{3mm}

\noindent \textit{Proof of \cref{thm:bijectiveqsbasicsetsBVmodelswithssclosingproperty}}

Let $(V,E,\ge,\psi)$ be a quasi-simple
 Bratteli--Vershik model.
Then, in the zero-dimensional system $(E_{0,\infty},\psi)$,
 it follows that $E_{0,\infty,\min}$ is a quasi-simple quasi-section
 by \cref{thm:quasi-simple}.
To show the converse,
 let $(X,f,B)$ be a triple of a zero-dimensional system and a quasi-simple
 quasi-section.
Combining
 \cref{thm:from-quasi-section-to-KR,prop:KR-to-ordered-Bratteli-diagram},
 we obtain a Bratteli--Vershik model
 $(V,E,\ge,\psi)$
 such that $(E_{0,\infty}, \psi, E_{0,\infty,\min})$ is topologically 
 conjugate to $(X,f,B)$.
Now, the conclusion follows from \cref{thm:quasi-simple}.
\qed

\vspace{3mm}

Evidently, the same argument as in \cite[Theorem 4.4]{HERMAN_1992OrdBratteliDiagDimGroupTopDyn} can be applied in the case of K--R refinements of 
 a triple $(X,f,B)$ of a zero-dimensional system
 $(X,f)$ and a quasi-section $B \subseteq X$ (cf. \cref{rem:telescoping-KR-refinement}).
Therefore, the correspondence in the previous theorem can be refined 
 to the level of the ordered Bratteli diagrams partially as the following
 (see also \cite[Theorem 6.6]{BezuglyiNiuSun_2021CstarAlgOfaCantorSysWithFinitelyManyMinStructKTtheAndIndexMap}).
\begin{thm}\label{thm:triple-implies-equivalent-ordered-Bratteli-diagrams}
Let $(X,f)$ be a zero-dimensional system and $B$ be a quasi-section.
The equivalence class of the ordered Bratteli diagram constructed 
 from the triple $(X,f,B)$ does not depend on the choice of K--R refinements.
\end{thm}
\begin{proof}
The proof is evident from \cref{rem:telescoping-KR-refinement}.
\end{proof}

An ordered Bratteli diagram does not have 
 the continuous Vershik map in general
 (cf., for example, \cite{BezuglyiYassawi2017OrdersThatYieldHomeoOnBratteliDiagrams,BezuglyiKwiatkowskiYassawi_2014PerfectOrderingsOnFiniteRankBratteliDiagrams}).
Therefore, the converse statement to \cref{thm:triple-implies-equivalent-ordered-Bratteli-diagrams} must be partial.
To get some bijective correspondences, we have to clarify the subclass
 of the triples and also that of the ordered Bratteli diagrams.
To do this, we need the work by Downarowicz and Karpel
 \cite{DownarowiczKarpel_2019DecisiveBratteliVershikmodels}.
We present this in \cref{sec:decisiveness}.

\section{Decisiveness, quasi-sections, and basic sets}\label{sec:decisiveness}

In \cite{DownarowiczKarpel_2019DecisiveBratteliVershikmodels},
 Downarowicz and Karpel provided the following definition:
\begin{defn}\label{defn:decisive}
An ordered Bratteli diagram $(V,E,\ge)$ is \textit{decisive}
 if the Vershik map
 $\psi: E_{0,\infty} \setminus E_{0,\infty,\max} \to E_{0,\infty}$
 prolongs in a unique manner to a homeomorphism
 $\psi:E_{0,\infty} \to E_{0,\infty}$.
\end{defn}
In this study, for a decisive ordered Bratteli diagram $(V,E,\ge)$,
 the unique Bratteli--Vershik model
 $(V,E,\ge,\psi)$ is also considered to be \textit{decisive}.
They also provided the following definition:
\begin{defn}\label{defn:Bratteli--Vershikizable}
A zero-dimensional system $(X, f)$
 is called \textit{Bratteli--Vershikizable}
 if it is conjugate to $(E_{0,\infty},\psi)$
 for a decisive ordered Bratteli diagram $(V,E,\ge)$.
\end{defn}
Moreover, they have shown that a zero-dimensional system $(X,f)$
 is Bratteli--Vershikizable if and only if
 either the set of aperiodic points is dense
 or its closure misses one periodic orbit
 (see \cite[Theorem 3.1]{DownarowiczKarpel_2019DecisiveBratteliVershikmodels}).
They have given a fundamental observation of decisiveness in
 \cite[Proposition 1.2]{DownarowiczKarpel_2019DecisiveBratteliVershikmodels}.
We present this as follows:
\begin{prop}[Proposition 1.2 by Downarowicz and Karpel \cite{DownarowiczKarpel_2019DecisiveBratteliVershikmodels}]\label{prop:basic-concept-from-DK}
An ordered Bratteli diagram $(V,E,\ge)$ is decisive if and only if
 the following two conditions hold:

\noindent \textup{(A)} A homeomorphic Vershik map can be defined, and

\vspace{2mm}

\noindent \textup{(B)}
 $\textup{int}E_{0,\infty,\min} = \textup{int}E_{0,\infty,\max} = \kuu$
 or
 $\abs{\textup{int}E_{0,\infty,\min}}
 = \abs{\textup{int}E_{0,\infty,\max}} =1$.
\end{prop}
\begin{proof}
Suppose that $(V,E,\ge)$ is decisive.
Then, the Vershik map can be extended as a homeomorphism.
Therefore, Condition (A) is evident.
If exactly one of $\textup{int}E_{0,\infty,\min} \nekuu$
 and $\textup{int}E_{0,\infty,\max} \nekuu$ holds,
then it is evident that any homeomorphic Vershik map is not possible.
If both $\textup{int}E_{0,\infty,\min} \nekuu$
 and $\textup{int}E_{0,\infty,\max} \nekuu$ hold,
then the uniqueness of Vershik map implies 
 $\abs{\textup{int}E_{0,\infty,\min}}
 = \abs{\textup{int}E_{0,\infty,\max}} =1$.
Therefore, we can conclude Condition (B).
Suppose that both (A) and (B) hold.
Then, the existence of the homeomorphic Vershik map is obtained from 
 Condition (A).
The uniqueness follows from Condition (B).
\end{proof}
In \cite{DownarowiczKarpel_2019DecisiveBratteliVershikmodels},
 Downarowicz and Karpel presented the condition for a zero-dimensional system
 to be Bratteli--Vershikizable.
Based on \cref{thm:bijectivetriplesBVmodels},
 we show a new proof using \cref{prop:minimalqscontdic}.
\begin{thm}[Downarowicz and Karpel Theorem 3.1\cite{DownarowiczKarpel_2019DecisiveBratteliVershikmodels}]\label{thm:Bratteli-Vershikizability-new}
A zero-dimensional system $(X,f)$
is Bratteli--Vershikizable if and only if either the set of aperiodic points is
dense, or its closure misses one periodic orbit.
\end{thm}
\begin{proof}
The `only if' part proceeds as in 
 \cite{DownarowiczKarpel_2019DecisiveBratteliVershikmodels}.
To show this,
 let $(X,f)$ be a zero-dimensional system that is Bratteli--Vershikizable.
Let $(V,E,\ge,\psi)$ be a decisive Bratteli--Vershik model that is
 topologically conjugate to $(X,f)$.
Then, $\text{int}E_{0,\infty,\min}$ is at most a single point $x_0$;
 otherwise, the Vershik map is not unique.
We remove the orbit $O(x_0)$ if necessary and we assume that
 $\text{int}E_{0,\infty,\min} \iskuu$.
Suppose that the set of aperiodic points is not dense.
Then, the set of periodic points contains an open set.
By the Baire category theorem, there exists a positive integer $n$
 such that the set of periodic points with the least period $n$
 contains a non-empty open set $U$.
This is impossible because $\text{int}E_{0,\infty,\min} \iskuu$.
We therefore conclude that the set of aperiodic points is dense.
If $O(x_0)$ is a periodic orbit, then it is an isolated periodic
 orbit, as desired.
If $O(x_0)$ is aperiodic, then after recovering $O(x_0)$ the system
 is still densely aperiodic, as desired.
This concludes the `only if' part.
To show the `if' part, let $(X,f)$ be a zero-dimensional system
 such that the set of aperiodic points is
dense, or its closure misses one periodic orbit.
We remove the isolated periodic orbit if one exists.
Then, $(X,f)$ is a densely aperiodic zero-dimensional system.
Let $B$ be a minimal quasi-section.
By \cref{prop:minimalqscontdic},
 we get that $\textrm{int}B \iskuu$.
Thus, the triple $(X,f,B)$ is continuously decisive.
By \cref{thm:bijectivetriplesBVmodels}, there exists
 a Bratteli--Vershik model constructed from $(X,f,B)$.
Finally, owing to \cref{prop:basic-concept-from-DK} that is an observation
 by Downarowicz and Karpel, we obtain that
 every Bratteli--Vershik model constructed from $(X,f,B)$ is decisive and
 also continuously decisive.
We only need to recover the isolated periodic orbit if one existed.
\end{proof}

\begin{prop}\label{prop:from-a-decisive-B-diagram-to-a-class-of-zero-d-sys-triple}
Let $(V^1,E^1,\ge)$ and $(V^2,E^2,\ge)$ be equivalent
 ordered Bratteli diagrams.
Suppose that $(V^1,E^1,\ge)$ is decisive.
Then, the other is also decisive and
 we obtain unique zero-dimensional systems $({E^1}_{0,\infty},\psi_1)$
 and $({E^2}_{0,\infty},\psi_2)$ such that the triples
 $({E^1}_{0,\infty},\psi_1,{E^1}_{0,\infty,\min})$
 and $({E^2}_{0,\infty},\psi_2,{E^2}_{0,\infty,\min})$
 are topologically conjugate.
\end{prop}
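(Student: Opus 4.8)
The plan is to reduce everything to the behaviour of a single telescoping and then to propagate along the zig-zag of telescopings that witnesses the equivalence. First I would record the elementary functoriality of telescoping on path spaces: if $(V',E',\ge)$ is a telescoping of $(V,E,\ge)$, the canonical identification $E_{0,\infty}\cong E'_{0,\infty}$ that accompanies any telescoping is a homeomorphism $h$ preserving the extremal path sets, $h(E_{0,\infty,\min})=E'_{0,\infty,\min}$ and $h(E_{0,\infty,\max})=E'_{0,\infty,\max}$, and compatible with the lexicographic order on cofinal paths; consequently it intertwines the partial Vershik maps, $h\circ\psi=\psi'\circ h$ on $E_{0,\infty}\setminus E_{0,\infty,\max}$. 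The only point to check is that the lexicographic order is unchanged when consecutive edges are grouped into compound edges, which is immediate since the order of two cofinal paths is decided at the last level where they differ.

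Next I would show that decisiveness passes across a single telescoping in both directions. Suppose $(V,E,\ge)$ is decisive with unique homeomorphic extension $\Psi$ of $\psi$. Then $h\circ\Psi\circ h^{-1}$ is a homeomorphic extension of $\psi'$, so $(V',E',\ge)$ admits one; and if $\Psi'$ were another, then $h^{-1}\circ\Psi'\circ h$ would be a homeomorphic extension of $\psi$, forcing $\Psi'=h\circ\Psi\circ h^{-1}$ by uniqueness. Hence $(V',E',\ge)$ is decisive, and since $h$ is invertible the same argument applied to $h^{-1}$ gives the converse. Thus decisiveness is invariant under telescoping. As equivalence of ordered Bratteli diagrams is by definition a finite chain of telescopings through a common diagram $(V,E,\ge)$, decisiveness of $(V^1,E^1,\ge)$ propagates along this chain to $(V^2,E^2,\ge)$, and in particular each of $\psi_1,\psi_2$ is the unique homeomorphic Vershik map of its diagram.

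For the conjugacy, compose the telescoping homeomorphisms along the same chain to obtain a single homeomorphism $\Phi:E^1_{0,\infty}\to E^2_{0,\infty}$ that preserves the min and max sets and intertwines the partial Vershik maps. To upgrade this to an intertwining of the full maps $\psi_1,\psi_2$, I would observe that $\Phi^{-1}\circ\psi_2\circ\Phi$ is a homeomorphism of $E^1_{0,\infty}$ agreeing with the partial Vershik map of $(V^1,E^1,\ge)$ off $E^1_{0,\infty,\max}$ (here using that $\Phi$ preserves max sets); decisiveness of $(V^1,E^1,\ge)$ then forces $\Phi^{-1}\circ\psi_2\circ\Phi=\psi_1$, i.e. $\psi_2\circ\Phi=\Phi\circ\psi_1$ on all of $E^1_{0,\infty}$. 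Finally, since $\Phi(E^1_{0,\infty,\min})=E^2_{0,\infty,\min}$ and these min sets are quasi-sections by \cref{thm:from-BV-to-quasi-section}, the homeomorphism $\Phi$ is precisely a topological conjugacy of the triples $(E^1_{0,\infty},\psi_1,E^1_{0,\infty,\min})$ and $(E^2_{0,\infty},\psi_2,E^2_{0,\infty,\min})$ in the sense of \cref{nota:equivalence-for-triple}.

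The step I expect to be the main obstacle is the clean handling of uniqueness: one must invoke decisiveness not merely to produce a continuous extension but to pin down that the conjugated map $\Phi^{-1}\circ\psi_2\circ\Phi$ coincides with $\psi_1$ on the \emph{whole} space, and symmetrically that decisiveness descends as well as ascends along a telescoping. Everything else---the order-preservation of $h$ and the bookkeeping of the zig-zag---is routine once the single-telescoping statement is in place.
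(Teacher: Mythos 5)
Your proposal is correct and follows essentially the same route as the paper: pass through the common diagram $(V,E,\ge)$ witnessing the equivalence, use the canonical path-space identifications induced by telescoping (which preserve the min/max sets and the lexicographic order, hence intertwine the partial Vershik maps), and invoke decisiveness to transport the unique homeomorphic extension along the chain. The paper compresses all of this into the assertion that the isomorphisms are ``evident''; your write-up merely supplies the details, in particular the uniqueness argument showing $\Phi^{-1}\circ\psi_2\circ\Phi=\psi_1$ on the whole space, which is exactly the point the paper leaves implicit.
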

\begin{proof}
Let $(V^1,E^1,\ge)$ and $(V^2,E^2,\ge)$ be equivalent
 ordered Bratteli diagrams.
Suppose that $(V^1,E^1,\ge)$ is decisive.
Let $(V,E,\ge)$ be an ordered Bratteli diagram such that
 $(V^1,E^1,\ge)$ and $(V,E,\ge)$ have a common telescoping, and $(V,E,\ge)$ and  $(V^2,E^2,\ge)$ have a common telescoping. Because $(V^1,E^1,\ge)$ is decisive, there exists a unique Bratteli--Vershik
 model $(E^1_{0,\infty},\psi_1)$.
It is evident that we can obtain an isomorphism
 $(E^1_{0,\infty},\psi_1,E^1_{0,\infty,\min})
 = (E_{0,\infty},\psi,E_{0,\infty,\min})$.
Particularly, we find that $(E,V,\ge)$ is also decisive.
In the same way, we find an isomorphism 
 $(E_{0,\infty},\psi,E_{0,\infty,\min})
 = (E^2_{0,\infty},\psi_2,E^2_{0,\infty,\min})$.
In particular, we find that $(E^2,V^2,\ge)$ is also decisive.
We have already obtained an isomorphism 
 $(E^1_{0,\infty},\psi_1,E^1_{0,\infty,\min})
 = (E^2_{0,\infty},\psi_2,E^2_{0,\infty,\min})$.
This concludes the proof.
\end{proof}

\vspace{3mm}

\noindent \textit{Proof of \cref{thm:bijective-continuously-decisive}}

Let $(V,E,\ge)$ be a continuously decisive ordered Bratteli diagram.
Because it is decisive, we can obtain a unique 
 Bratteli--Vershik model $(E_{0,\infty},\psi)$.
Thus, we can obtain a triple $(E_{0,\infty},\psi,E_{0,\infty,\min})$.
By \cref{prop:from-a-decisive-B-diagram-to-a-class-of-zero-d-sys-triple},
 we can also conclude that equivalent ordered Bratteli diagrams
 bring about the same topological conjugacy
 class of triples.
It is evident that $E_{0,\infty,\min}$ is a quasi-section.
We find that $E_{0,\infty,\min}$ has an empty interior as per the definition
 of continuous decisiveness.
Conversely, let $(X,f,B)$ be a continuously decisive triple
 of a zero-dimensional
 system $(X,f)$ with a quasi-section $B$.
Then, by \cref{thm:bijectivetriplesBVmodels}, we obtain
 a Bratteli--Vershik model $(V,E,\ge,\psi)$ such that $E_{0,\infty,\min} = B$
 by a topological conjugacy.
By \cref{thm:triple-implies-equivalent-ordered-Bratteli-diagrams},
 the ordered Bratteli diagram $(V,E,\ge)$ is unique up to equivalence.
Finally, by \cref{prop:basic-concept-from-DK}, these Bratteli--Vershik models
 are decisive and also continuously decisive.
\qed

From the proof of \cref{thm:bijective-continuously-decisive},
 the \cref{thm:bijectivebasicsetsBVmodelswithclosingproperty,thm:bijectiveqsbasicsetsBVmodelswithssclosingproperty} can be transferred to the level of
 ordered Bratteli diagrams.
We list the following obvious consequences:

\begin{cor}\label{cor:bijective-continuously-decisive-closing}
There exists a bijective correspondence between the equivalence classes of
 continuously decisive ordered Bratteli diagrams with closing property
 and the topological conjugacy
 classes of continuously decisive triples of
 zero-dimensional systems with basic sets.
\end{cor}
\begin{proof}
By \cref{thm:closing-basic} and
 the proof of \cref{thm:bijective-continuously-decisive},
 a proof is self-evident.
\end{proof}

\begin{cor}\label{cor:bijective-continuously-decisive-quasi-simple}
There exists a bijective correspondence between the equivalence classes of
 continuously decisive quasi-simple ordered Bratteli diagrams
 and the topological conjugacy
 classes of continuously decisive triples of
 zero-dimensional systems with quasi-simple quasi-sections.
\end{cor}
\begin{proof}
By \cref{thm:quasi-simple}
 and the proof of \cref{thm:bijective-continuously-decisive},
 a proof is self-evident.
\end{proof}

\section{Applications}\label{sec:applic}

Firstly, we confirm that
 the Bratteli--Vershikizable systems are realized by
 the Bratteli--Vershik models with the stronger conditions
 that are described above in this paper.

\begin{thm}\label{thm:quasi-simple-decisive-and-closing}
Let $(X,f)$ be a Bratteli--Vershikizable zero-dimensional system.
Then, there exists a quasi-simple decisive Bratteli--Vershik model
 $(V,E,\ge,\psi)$ of $(X,f)$ with the closing property.
\end{thm}
\begin{proof}
Let $(X,f)$ be a Bratteli--Vershikizable zero-dimensional system.
Owing to
 \cref{thm:Bratteli-Vershikizability-new} or directly from
 \cite[Theorem 3.1]{DownarowiczKarpel_2019DecisiveBratteliVershikmodels},
 either the set of aperiodic points is
dense, or its closure misses one periodic orbit.
If there exists a unique isolated periodic orbit,
 then we exclude it from $(X,f)$; getting densely aperiodic $(X,f)$.
Therefore, by \cref{thm:qs-main},
 there always exists
 a quasi-simple continuously decisive basic $B \subseteq X$.
Then,
 by \cref{cor:bijective-continuously-decisive-closing,cor:bijective-continuously-decisive-quasi-simple},
 we obtain a quasi-simple continuously decisive
 Bratteli--Vershik model $(V,E,\ge,\psi)$ with closing property
for $(X,f)$
 with $E_{0,\infty,\min} = B$ by topological conjugacy.
By recovering the isolated periodic orbit if necessary, we obtain
 a quasi-simple decisive ordered Bratteli diagram with closing property.
\end{proof}

We show that in a large class of zero-dimensional systems,
 closing property implies decisiveness.

\begin{thm}\label{thm:da-non-wandering-closing-decisive}
Let $(X,f)$ be densely aperiodic and $\Omega(f) = X$.
Suppose that $(V,E,\ge,\psi)$ is a corresponding Bratteli--Vershik model with
 closing property.
Then, $(V,E,\ge)$ is decisive.
\end{thm}
\begin{proof}
By the assumption and by \cref{thm:closing-basic},
 it follows that $E_{0,\infty,\min}$ is a basic set.
Because $(X,f)$ is densely aperiodic,
 it follows that $E_{0,\infty,\min}$ is continuously decisive
 by \cref{thm:ap-non-wandering}.
By \cref{prop:basic-concept-from-DK}, we conclude that
 $(V,E,\ge)$ is decisive.
\end{proof}

\begin{cor}\label{cor:tt-closing-decisive}
Let $(X,f)$ be a topologically transitive zero-dimensional system.
Suppose that $(V,E,\ge,\psi)$ is a corresponding Bratteli--Vershik model with
 closing property.
Then, $(V,E,\ge)$ is decisive.
\end{cor}
\begin{proof}
Let $(X,f)$ be a topologically transitive zero-dimensional system.
Then, by \cite[Proposition 1.1]{Silverman_1992OnMapsWithDenseOrbitsAndTheDefinitionOfChaos}, there exists a dense orbit.
Suppose that the dense orbit is a periodic orbit.
Then, we obtain that $(V,E,\ge,\psi)$ is decisive.
Suppose that the dense orbit is not a periodic orbit.
Then, $(X,f)$ is densely aperiodic.
Because $(X,f)$ is topologically transitive,
 we obtain  $\Omega(f) = X$.
Therefore, the conclusion follows
 from \cref{thm:da-non-wandering-closing-decisive}.
\end{proof}

Finally, we could not identify Bratteli--Vershik models
 that are topologically conjugate to $(X,f,\inf_f)$.

\vspace{5mm}

\noindent
\textsc{Acknowledgments:}
This work was partially supported by JSPS KAKENHI (Grant Number JP20K03643).
I would like to thank Editage (www.editage.jp)
 for providing English-language editing services.

\bibliographystyle{amsalpha}

\begin{thebibliography}{GHO23}

\bibitem[BKY14]{BezuglyiKwiatkowskiYassawi_2014PerfectOrderingsOnFiniteRankBratteliDiagrams}
S.~Bezuglyi, J.~Kwiatkowski, and R.~Yassawi, \emph{Perfect orderings on finite
  rank {B}ratteli diagrams}, Canad. J. Math. \textbf{66} (2014), 57--101.

\bibitem[BNS21]{BezuglyiNiuSun_2021CstarAlgOfaCantorSysWithFinitelyManyMinStructKTtheAndIndexMap}
S.~Bezuglyi, Z.~Niu, and W.~Sun, \emph{C*-algebras of a {C}antor system with
  finitely many minimal subsets: structures, {K}-theories, and the index map},
  Ergod. Theory Dynam. Syst. \textbf{41} (2021), no.~5, 1296–1341.

\bibitem[BY17]{BezuglyiYassawi2017OrdersThatYieldHomeoOnBratteliDiagrams}
S.~Bezuglyi and R.~Yassawi, \emph{Orders that yield homeomorphisms on
  {B}ratteli diagrams}, Dyn. Syst. \textbf{32} (2017), no.~2, 249--282.

\bibitem[DK18]{DownarowiczKarpel_2018DynamicsInDimensionZeroASurvey}
T.~Downarowicz and O.~Karpel, \emph{Dynamics in dimension zero. {A} survey},
  Discrete Contin. Dyn. Syst. \textbf{38} (2018), 1033--1062.

\bibitem[DK19]{DownarowiczKarpel_2019DecisiveBratteliVershikmodels}
\bysame, \emph{Decisive {B}ratteli--{V}ershik models}, Stud. Math. \textbf{247}
  (2019), 251--271.

\bibitem[GHO23]{GHO_2023TopologicalFactoringOfZeroDimensionalDynamicalSystems}
N.~Golestani, M.~Hosseini, and Y.~Oghli, \emph{Topological factoring of
  zero-dimensional dynamical systems}, arXiv:2307.01156v2 (2023).

\bibitem[GPS95]{GPS_95TopologicalOrbitEquivAndCstarCrossedProduct}
T.~Giordano, I.~F. Putnam, and C.~F. Skau, \emph{Topological orbit equivalence
  and {$C^*$}-crossed products}, J. Reine Angew. Math. \textbf{469} (1995),
  51--111.

\bibitem[HPS92]{HERMAN_1992OrdBratteliDiagDimGroupTopDyn}
R.~H. Herman, I.~F. Putnam, and C.~F. Skau, \emph{Ordered {B}ratteli diagrams,
  dimension groups and topological dynamics}, Internat. J. Math. \textbf{03}
  (1992), no.~06, 827--864.

\bibitem[M06]{Medynets_2006CantorAperSysBratDiag}
K.~Medynets, \emph{Cantor aperiodic systems and {B}ratteli diagrams}, C. R.
  Acad. Sci. Paris, Ser. I \textbf{342} (2006), no.~1, 43--46.

\bibitem[Poo90]{Poon1990AFSubAlgebrasOfCertainCrossedProducts}
Y.~T. Poon, \emph{{AF} subalgebras of certain crossed products}, Rocky Mountain
  J. Math. \textbf{20} (1990), no.~2, 529--539.

\bibitem[Put90]{Putnam89TheCstarAlgebrasAssociatedWithMinimalHomeomorphismsOfTheCantorSet}
I.~F. Putnam, \emph{The ${C}^*$-algebras associated with minimal homeomorphisms
  of the cantor set}, Pacific Journal of Mathematics \textbf{10} (1990),
  197--207.

\bibitem[Shi20]{Shimomura_2020AiMBratteliVershikModelsAndGraphCoveringModels}
T.~Shimomura, \emph{{B}ratteli--{V}ershik models and graph covering models},
  Advances in Mathematics \textbf{367} (2020), 107127.

\bibitem[Sil92]{Silverman_1992OnMapsWithDenseOrbitsAndTheDefinitionOfChaos}
S.~Silverman, \emph{On maps with dense orbits and the definition of chaos},
  Rocky Mountain Jour. Math. \textbf{22} (1992), 353--375.

\end{thebibliography}
\providecommand{\bysame}{\leavevmode\hbox to3em{\hrulefill}\thinspace}
\providecommand{\MR}{\relax\ifhmode\unskip\space\fi MR }
\providecommand{\MRhref}[2]{%
  \href{http://www.ams.org/mathscinet-getitem?mr=#1}{#2}
}
\providecommand{\href}[2]{#2}

\end{document}